\documentclass[reqno]{amsart}
\usepackage{multicol,a4wide,enumerate}
\usepackage{amsmath}
 \usepackage{enumitem}
\usepackage{amsaddr}
\usepackage{color}
\usepackage{cmap,mathtools}
\usepackage{cite}
\usepackage{xcolor}
\usepackage{hyperref}
\hypersetup{
    colorlinks=true,
    linkcolor=blue,
    filecolor=magenta,      
    urlcolor=green,
    citecolor=forestgreen,
}
\usepackage[hyperpageref]{backref}
\definecolor{mediumcarmine}{rgb}{0.69, 0.25, 0.21}
\definecolor{forestgreen}{rgb}{0.13, 0.85, 0.15}
\usepackage[hyperpageref]{backref}
\usepackage{yhmath,amsmath,amsfonts,amssymb, enumerate,amsthm, appendix, caption,lipsum,}
 \usepackage[T1]{fontenc}
 \usepackage{enumitem}
 \usepackage{mathtools, bm}
 \usepackage{cmap}
 \usepackage[hyperpageref]{backref}
 \usepackage{bigints}
 \usepackage{esint}
 \usepackage[margin=0.55in]{geometry}
\newtheorem{definition}{Definition}[section]
\newtheorem{theorem}[definition]{Theorem}
\newtheorem{example}[definition]{Example}

\newtheorem{remark}[definition]{Remark}
\newtheorem{proposition}[definition]{Proposition}
\newtheorem{lemma}[definition]{Lemma}

\numberwithin{equation}{section}
\DeclarePairedDelimiter\abs{\lvert}{\rvert}
\DeclarePairedDelimiter\norm{\lVert}{\rVert}
\makeatletter
\let\oldnorm\norm
\def\norm{\@ifstar{\oldnorm}{\oldnorm*}}

\makeatletter

\newcommand{\noi} {\noindent}

\newcommand{\ra} {\rightarrow}

\DeclareMathAlphabet{\mathpzc}{T1}{pzc}{m}{it}

\def\dr{{\rm d}r}
\def\H{{\mathbb{H}}}

\def\Dh{{{\mathcal D}^{1,2}(\H^N)}}

\def\R{{\mathbb R}}
\def\N{{\mathbb N}}
\def\C{{\mathcal C}}

\def\d{{\rm d}}
\def\dxi{{\rm d}\xi}
\def\deta{{\rm d}\eta}

\def\l2{\mathcal M\log L}

\def\c1Loc{{\C_{loc}^1}}

\def\Gag2{\iint_{\R^{2N}} \frac{(u(x)-u(y))^2}{|x-y|^{N+sp}}\ \dxy}

\def\Gagn2{\iint_{\R^{2N}} \frac{(u_n(x)-u_n(y))^2}{|x-y|^{N+sp}}\ \dxy}

\newcommand{\dxy}{{\mathrm{d}x\mathrm{d}y}}

\makeatletter
\newcommand{\leqnomode}{\tagsleft@true\let\veqno\@@leqno}
\makeatother
\usepackage{orcidlink}
\title[semipositone problems over Heisenberg groups]{ Positive solutions to semipositone problems on Heisenberg group}
 \author[V. Y. Naik and R. Kumar]{Vikram Yallapa Naik$^1$ \orcidlink{0009-0004-6612-2577}  Rohit Kumar$^2$ \orcidlink{0009-0001-6494-6407}} 
 \address{$^1$Department of Mathematics, Birla Institute of Technology and Science, Pilani,\\
 Pilani Campus, Vidya Vihar, Pilani, Rajasthan 333031, India}
 \address{$^2$School of Mathematics and Statistics, Northeast Petroleum University,\\
 	Daqing, 163318, P.R. China.}
 \email{p20210042@pilani.bits-pilani.ac.in, vikramnaik154@gmail.com}  \email{rohit1.iitj@gmail.com}
 \thanks{$^2$Corresponding author}
 \subjclass[2020]{35D30, 35A15, 35B65, 35B09}
 \keywords{Semipositone problems; Heisenberg groups; sub-elliptic Laplacian; mountain pass solutions; positive solutions}

\begin{document}

\begin{abstract}
In this article, we study the existence of positive weak solutions to a class of semipositone problems on the Heisenberg group $\mathbb{H}^N$. More precisely, we study the problem
\begin{equation*}
          -\Delta_{\mathbb{H}}u=  g(\xi)f_a(u) \text{ in } \mathbb{H}^N \tag{$\mathbb{P}_a$},
\end{equation*} 
where $a>0$ is a real parameter, $g$ is a positive function, and $f_a:\mathbb{R}\to\mathbb{R}$ is a continuous and admits subcritical nonlinearity that takes negative values on part of its domain. The sign-changing nature of $f_a$ causes the strong maximum principle to fail, introducing substantial challenges in establishing positivity. Our first main result concerns the existence of weak solutions to \eqref{Main problem}, which we obtain via the mountain pass theorem. We then derive qualitative properties of these solutions and use them to prove that $u_a \geq 0$ for sufficiently small values of $a$. Finally, by applying the Riesz potential formula, we conclude the strict positivity of solutions. This work extends the results of Alves et. al. (Proc. Roy. Soc. Edinburgh Sect. A, 150(5):2349–2367, 2020) to the setting of the Heisenberg group.
\end{abstract} 
\maketitle
\vspace{-0.5cm}
\section{Introduction}
In this article, we study a class of semipositone problem on the Heisenberg group $\mathbb{H}^N$. The term \textit{semipositone} refers to the behaviour of the source term near zero, which can be strictly negative in certain regions of the domain. To be more precise, we are interested in the following problem: 
\begin{equation} \label{Main problem}
          -\Delta_{\H}u=  g(\xi)f_a(u) \text{ in } \H^N \tag{$\mathbb{P}_a$},
\end{equation} 
where $a>0$ is a real parameter and $f_a: \R \rightarrow \R$ is a continuous function defined as:
\begin{align} \label{1.1}
    f_a(s) = \begin{cases}
        f(s)-a & \text{ if } s \geq 0,\\
        -a(s+1)      & \text{ if }s\in [-1,0],\\
        0      & \text{ if }s \leq -1,
    \end{cases} 
\end{align}
where $f:[0,\infty) \rightarrow [0,\infty)$ is a continuous function such that $f(0)=0$ and $g:\H^N \rightarrow \R$ is a positive function. Further, the functions $g$ and $f$ satisfy the following assumptions:
\begin{enumerate}[label={($\bf g{\arabic*}$)}]
    \setcounter{enumi}{0}
    \item \label{g1} $g \in L^{1}(\H^N) \cap L^{\infty}(\H^N)$. 
   \end{enumerate} 
\begin{enumerate}[label={($\bf f{\arabic*}$)}]
    \setcounter{enumi}{0}
    \item \label{f1}  $\displaystyle \lim\limits_{s \rightarrow 0^+} \frac{f(s)}{s}=0,$ and $\displaystyle \lim\limits_{s \rightarrow \infty} \frac{f(s)}{s^{\gamma-1}} \leq C(f)$ for some $\gamma \in \left(2,Q^*\right)$ and $C(f)>0$. The exponent $ Q^*=\frac{2Q}{Q-2}$ is the Sobolev critical exponent. 
    
    \item \label{f2} (Ambrosetti-Rabinowitz) there exist $\vartheta >2$ and $s_0>0$ such that \[0< \vartheta F(s) \leq sf(s), \quad \forall \, s>s_0, \text{ where } F(s)= \int_{0}^{s} f(\tau) \,\mathrm{d}\tau.\]
   \end{enumerate}
The semipositone problem was first introduced by Castro and Shivaji \cite{Castro1988} in one dimension.
They established the existence of a non-negative solution for the second-order boundary value problem
\begin{align*}
\begin{cases}
    -u''(x) = \lambda f(u(x)), \quad x\in (0,1) \\
    u(0) =u(1)= 0,
\end{cases}
\end{align*}
where $f(0)<0$. Thereafter, Castro and Shivaji\cite{Castro1989} proved the existence of a radial solution to the above problem in the unit ball in dimension $N \geq 2,$ namely,
\begin{align*}
\begin{cases}
    -\Delta u(x) = \lambda f(u(x)), \text{ for } |x| < 1, \quad x \in \mathbb{R}^N, \, \\
u(x) =0 \text{ on }|x|=1.
\end{cases}
\end{align*}
Subsequently, several authors investigated semipositone problems in general bounded domains (cf. \cite{Ambrosetti1994, Castro1995, Caldwell2007, Costa2006}). Using both variational and non-variational techniques, these works established the existence of positive solutions for classical Laplacian problems. Later, the analysis was extended to the classical $p$-Laplacian setting in bounded domains in \cite{Chhetri2015,Castro2016}, who employed the mountain pass theorem and degree theory to obtain positive solutions. In a more general framework, Alves et al. \cite{Alves2019} studied semipositone problems involving nonlinearities with Orlicz growth. Concerning semipositone problems driven by fractional Laplacian operators in bounded domains, the first result was obtained by Dhanya and Tiwari \cite{Dhanya2021}, who treated nonlinearities exhibiting both concave and convex behavior. More recently, Lopera et al. \cite{Lopera2023} addressed fractional $p$-Laplacian semipositone problems with subcritical nonlinearities satisfying Ambrosetti–Rabinowitz type conditions.

To the best of our knowledge, the first study of semipositone problems with classical Laplace operator on the whole space $\mathbb{R}^N$ was carried out by Alves et al. \cite{Alves2020}, where the authors established the existence of a solution via the mountain pass theorem and obtained positivity through the Brézis–Kato regularity method. This line of research was extended to the biharmonic operator in \cite{Biswas2023}. The author in \cite{Biswas2024} investigated the existence of positive solutions for the fractional Laplacian in $\mathbb{R}^N$, and the corresponding fractional $p$-Laplacian case was subsequently addressed in \cite{biswassemipositone}. More recently, Bisci et al. \cite{molica2024mountain} studied semipositone problems involving the subelliptic Grushin operator, again employing the mountain pass theorem to obtain weak solutions.

This article extends the results of \cite{Alves2020} to semipositone problems in the Heisenberg group. The Heisenberg group is a Lie group which has sub-Riemannian geometry. Since the Heisenberg group is non-commutative, it is evident that there are some difficulties in handling problems in such a framework. For futher information, we refer the recent articles \cite{Bai2024,Liang2024,Sun2022} and references therein. More details on the Heisenberg group $\mathbb{H}^N$ is given in section \ref{functional framework}. The existence of a positive solution is not straightforward, since the source term (i.e., $gf_a$ in our case) can be negative in some parts of the domain and we cannot apply the strong maximum principle to prove that the solution is positive. The existence of a positive solution to \eqref{Main problem} depends on the range of the parameter $a>0$. In this direction, we first establish the following theorem, which guarantees the existence of mountain-pass solutions.
\begin{theorem} \label{Existence and uniform boundedness}
Let $f,g$ satisfy \rm{\ref{f1}}-\rm{\ref{f2}} and \rm{\ref{g1}}. Then there exists a weak solution $u_a \in \Dh$ to \eqref{Main problem} for all $a \in (0,a_1)$ and $a_1 >0$ is given in Lemma \ref{MP1}. Moreover, there exists a constant $C>0$ such that $\|u_a\| \leq C$, for all $a \in (0,a_1)$.
\end{theorem}
The details about the solution space $\Dh$ and its associated norm $\|\cdot\|$ are given in Section \ref{functional framework}. In the next result, we prove the regularity of solutions using the Moser iteration technique. In particular, we prove the following theorem.
\begin{theorem}\label{Regularity-of-solutions}
    Let $f$ and $g$ satisfy {\rm\ref{f1}-\ref{f2}} and {\rm\ref{g1}}. There exists  $a_2 \in (0, a_1)$ such that $u_a \in L^\infty(\mathbb{H}^N) \cap C(\mathbb{H}^N)$ for all \( a \in (0, a_2) \).
\end{theorem}
Once we have the above regularity, we proceed to establish the positivity of the solutions and for that purpose we also need the following assumptions:
\begin{enumerate}[label={($\bf f{3}$)}]
    \setcounter{enumi}{0}
    \item \label{f3}  $f$ is locally Lipschitz. 
   \end{enumerate}
\begin{enumerate}[label={($\bf g{2}$)}]
    \setcounter{enumi}{0}
    \item \label{g2} $|\xi|_\mathbb{H}^{Q-2}\int\limits_{\mathbb{H}^N} \frac{g(\eta)}{|\eta^{-1} \circ \xi|_{\mathbb{H}}^{Q-2}} \deta<\infty,$ where $\xi \in \mathbb{H}^N \setminus \{0\}$. 
   \end{enumerate} 
The distance function $|\cdot|_{\mathbb{H}}$ is defined in Section \ref{functional framework}.
\begin{theorem} \label{thm 1.1}
    Let $f$ and $g$ satisfy {\rm\ref{f1}-\ref{f2}, \ref{g1}} and {\rm\ref{g2}}. Then the following results hold.
    \begin{itemize}
        \item[\rm(a)] Then exists $a_3 \in (0,a_2)$ such that $u_a \geq 0$ for all $a \in (0,a_3)$.

        \item[\rm(b)] In addition, if \ref{f3} is satisfied, then there exists $a_4 \in (0,a_3)$ such that $u_a > 0$ for all $a \in (0,a_4)$. 
    \end{itemize}
\end{theorem}

The organization of this article is as follows: In Section \ref{functional framework}, we establish the functional framework and some technical results required for our problem. In Section \ref{existence and regularity}, we prove the existence of the mountain pass solutions depending on the parameter $a$ and subsequently prove that these solutions are uniformly bounded in $\Dh$ with respect to the parameter $a$. In the next step, we prove some qualitative properties of solutions. The final section deals with showing that the mountain pass solutions are positive if $a$ is sufficiently small.

\section{Functional Framework and some technical results} \label{functional framework}
In this section, we establish the functional framework for our problem \eqref{Main problem} and prove several technical lemmas necessary to establish further results. We begin with some preliminaries on the Heisenberg groups. The Heisenberg group $\mathbb{H}^N=\mathbb{R}^{2N+1}$ is the homogeneous stratified Lie group with the group operation $\circ$ defined as
\begin{align*}
\xi\circ\xi'=(x+x',y+y',t+t'+2(x'y-y'x))    
\end{align*}
for every $\xi=(x,y,t),\xi '=(x',y',t')\in \mathbb{H}^N,$ together with the natural group of dilations defined as
\begin{align*}
    \delta_s(\xi)=(sx,sy,s^2t),
\end{align*}
for every $s>0,$ $ x, y \in \mathbb{R}^N$ and $t \in \mathbb{R}$. The homogeneous dimension on $\H^N$ is given by $Q=2N+2$ and the homogeneous norm on $\mathbb{H}^N$ is given by 
\begin{align*}
    |\xi|=|\xi|_{\mathbb{H}}=\left[(x^2+y^2)^2+t^2\right]^{\frac{1}{4}},  \  \mbox{for every}  \   \xi\in \mathbb{H}^N.
\end{align*}
The Lie algebra on $\mathbb{H}^N$ is generated by the vector fields,
\begin{align}\label{vector-fields}
  T=\frac{\partial}{\partial t},  \
X_{j}=\frac{\partial}{\partial
x_{j}}+2y_{j}\frac{\partial}{\partial t},  \
Y_{j}=\frac{\partial}{\partial y_{j}}-2x_{j}\frac{\partial}{\partial
t}, \quad j=1,\ldots,N.  
\end{align}
The horizontal gradient is given by
\begin{align}\label{Horizontal-Grad}
    \nabla_\mathbb{H} u(\xi)=(X_{1}u(\xi),X_{2}u(\xi),\cdots,X_{N}u(\xi),Y_{1}u(\xi),Y_{2}u(\xi),\cdots,Y_{N}u(\xi)).
\end{align}
The sub-elliptic Laplacian $\Delta_\mathbb{H}$ on $\mathbb{H}^N$ is defined as
\begin{align}\label{Sub-elliptic-Laplacian}
    \Delta_{\mathbb{H}}u(\xi)=\sum\limits_{j=1}^{N}X_{j}^{2}u(\xi)+Y_{j}^{2}u(\xi).
\end{align}
The Haar measure on $\H^N$ is the $\R^{2N+1}$ Lebesgue measure. For every $s > 0$, we have  
\[
|\delta_{s}(\Omega)| = s^{Q} |\Omega|, 
\quad \d (\delta_{s}(\xi)) = s^{Q}\, \dxi,
\quad \text{and} \quad 
|B_r(\xi)| = \alpha_{Q} r^{Q},
\]
where $\alpha_{Q} = |B_1(0)|, \ \Omega\subset \mathbb{H^N}.$ Here, $B_r(\xi)$ denotes the ball in 
$\mathbb{H}^N$ centered at $\xi$ with radius $r$. Polar integration formula on \( \mathbb{H}^N \) (see \cite[Proposition 1.15]{folland1982hardy}) is given by 
$$
\int\limits_{\mathbb{H}^N} f(\xi) \dxi = \int\limits_{0}^\infty \int\limits_{\Sigma} f(\delta_r(\xi)) r^{Q-1} \d \sigma\dr,
$$
where $f$ is an integrable function, $\Sigma= \{\xi \in \mathbb{H}^N : \quad \left|\xi \right|_{\H} =1\}$ and $\d \sigma$ is the surface measure.
For more details, we refer to the monograph \cite{Bonfiglioli2007}.

The natural space to look for the solutions to \eqref{Main problem} is the Beppo Levi space $\Dh$, which is the completion of $C_c^\infty(\mathbb{H}^N)$ with respect to the norm 
\begin{align}\label{norm}
   \norm{u}:=\left( \int_{\H^N}\abs{\nabla_\H u}^2 \,\dxi \right)^{\frac{1}{2}}.
\end{align}
The space $\Dh$ is a Hilbert space and it can also be characterized as follows
\begin{align}\label{solution-space}
    \Dh = \left\{ u \in L^{Q^*}(\H^N) : \int_{\H^N}\abs{\nabla_\H u}^2 \,\dxi< \infty  \right\}.
\end{align}
A function $u \in \Dh$ is said to be a weak solution to \eqref{Main problem} if the following Euler-Lagrange equation
\begin{align}\label{weak formulation}
    \int_{\H^N} \nabla_\H u \cdot \nabla_\H \varphi\, \dxi = \int_{\H^N} g(\xi)f_a(u) \varphi(\xi)\,\dxi,
\end{align}
holds for every $\varphi \in \Dh$. The energy functional associated with the weak formulation \eqref{weak formulation} is given by
\begin{align}\label{energy functional}
    J_a(u) = \frac{1}{2} \int_{\H^N}\abs{\nabla_{\H}u}^2\,\dxi - \int_{\H^N} g(\xi)F_a(u)\,\dxi.
\end{align}
If we define $K_a: \Dh \rightarrow \R$ as $ K_a(u) := \int_{\H^N} g(\xi)F_a(u(\xi))\,\dxi$. Then, $K_a$ is continuously Fr\'{e}chet differentiable on $\Dh$, and consequently $J_a$ becomes continuously Fr\'{e}chet differentiable on $\Dh$. The Fr\'{e}chet derivatives of $J_a$ is given by
 \begin{equation}\label{2.1}
     \begin{split}
   \langle J_a'(u), v\rangle &= \int_{\H^{N}} \nabla_\H u \cdot \nabla_\H v\, \dxi -\langle K_a'(u), v \rangle,\\
    &= \int_{\H^{N}} \nabla_\H u \cdot \nabla_\H v\, \dxi -\int_{\H^N} g(\xi)f_a(u) v\,\dxi,\; \forall \, v \in \Dh. 
     \end{split}
 \end{equation}
Thus, the critical points of $J_a$ are weak solutions to \eqref{Main problem} and our goal is to show the existence of positive critical points. 

From the hypotheses \ref{f1} and \ref{f2}, we derive the following estimates on $f_a$ and its associated function $F_a$ defined as $F_a(u):=\int_{0}^{u}f_a(\tau)\,\mathrm{d} \tau$:
\begin{align}\label{2.2}
     |f_a(s)| \leq \epsilon |s| + C(f,\epsilon) |s|^{\gamma-1} +a \text{ and } |F_a(s)| \leq \epsilon |s|^{2} + C(f,\epsilon) |s|^{\gamma}+a|s| \text{ for } s \in \R. 
\end{align} 
Moreover, for some $\tilde{a} \in (0,a)$, we can write
\begin{align}\label{2.3}
    |f_a(s)| \leq C(1+ |s|^{\gamma-1}) \text{ and } |F_a(s)| \leq C(|s|+|s|^\gamma) \text{ for } s \in \R, \text{ where } C=C(f,\epsilon,\Tilde{a}). 
\end{align}
Using \ref{f2}, we can find some $M_1,M_2>0$ such that the following estimate of $F$ holds:
\begin{align} \label{2.4}
    F(s) \geq M_1 s^\vartheta - M_2, \; \forall \, s \geq 0.  
\end{align}
From the Ambrosetti-Rabinowitz type condition \ref{f2} on $f$, we can also establish the following Ambrosetti-Rabinowitz condition on $f_a$ as follows:
\begin{align} \label{2.5}
    \vartheta F_a(s) \leq s f_a(s)+M_3,\;  \forall \, s \in \R \text{ and } a \in (0,\tilde{a}),
\end{align} 
where $M_3$ is independent of $a$ and $s$.

The weighted Lebesgue space $L^q(\H^N,g)$ is defined as
\begin{align*}
    L^q(\H^N,g) = \left\{ u \text{ is Lebesgue measurable} : \int_{\H^N}g(\xi)|u(\xi)|^q\,\dxi<\infty \right\}.
\end{align*} 
Recall that the embedding $\Dh \hookrightarrow L^{Q^*}(\H^N)$ is continuous and $\Dh \hookrightarrow L_{\text{loc}}^{q}(\H^N)$ is embedded continuously and compactly for every $q \in [1,Q^{*})$ (see \cite[Theorem 2.1]{Sun2022}). In the next result, we prove that $\Dh$ is compactly embedded in a weighted Lebesgue space.
\begin{lemma}\label{compactness weight g} 
    Let $g \in L^{r}(\H^N)$, where $r=\frac{Q^*}{Q^*-q}, q\in [1,Q^*)$. Then $\Dh$ is compactly embedded into $L^{q}(\H^N, g)$.
\end{lemma}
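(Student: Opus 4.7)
\emph{Plan.} Let $\{u_n\}$ be a bounded sequence in $\Dh$; to prove the compact embedding it suffices to extract a subsequence converging in $L^q(\H^N,|g|)$. By reflexivity of $\Dh$ I pass to a subsequence with $u_n \wra u$ in $\Dh$, and via the continuous Sobolev embedding $\Dh \embd L^{Q^*}(\H^N)$ the difference $\{u_n-u\}$ is bounded in $L^{Q^*}(\H^N)$, say $\|u_n-u\|_{L^{Q^*}} \leq C$. Using the compact local embedding $\Dh \embd L^r_{\mathrm{loc}}(\H^N)$ valid for every $r \in [1,Q^*)$ (in particular $r=q$), a standard diagonal argument over an exhaustion $\H^N = \bigcup_k B_k$ yields a further subsequence converging to $u$ almost everywhere in $\H^N$; in particular $|g|\,|u_n-u|^q \to 0$ a.e.

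From this point the strategy is to apply Vitali's convergence theorem to $\{|g|\,|u_n-u|^q\}$ in $L^1(\H^N)$. The central estimate, valid on any measurable $E\subset \H^N$, is H\"older with the conjugate pair $(m,\,Q^*/q)$, whose reciprocals sum to $1$ precisely because $m=Q^*/(Q^*-q)$:
\begin{align*}
\int_E |g|\,|u_n-u|^q \dxi \leq \left(\int_E |g|^m \dxi\right)^{1/m}\left(\int_E |u_n-u|^{Q^*} \dxi\right)^{q/Q^*} \leq C^q\,\|g\|_{L^m(E)}.
\end{align*}

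From this single inequality both Vitali hypotheses drop out. Equi-integrability follows from absolute continuity of the set function $E\mapsto \|g\|_{L^m(E)}$: given $\varepsilon>0$, choose $\delta>0$ so that $|E|<\delta$ forces $C^q\|g\|_{L^m(E)}<\varepsilon$. Tightness follows because $|g|^m\in L^1(\H^N)$, so $\|g\|_{L^m(B_R^c)}\to 0$ as $R\to\infty$, giving a radius beyond which the tail integral is uniformly $<\varepsilon$. Vitali then delivers $\int_{\H^N} |g|\,|u_n-u|^q\dxi \to 0$, i.e.\ $u_n\to u$ in $L^q(\H^N,|g|)$.

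The only delicate point is the exponent bookkeeping: one cannot naively transfer the local strong convergence to the whole of $\H^N$, because pairing $|g|$ locally against $|u_n-u|^{q_1}$ with $q_1<Q^*$ forces the H\"older conjugate of $g$ to exceed $m$, which is not available. The resolution is exactly to couple $|g|\in L^m$ against the critical exponent $Q^*$ (on which we only have \emph{boundedness}, not strong convergence) and then recover strong convergence by combining almost-everywhere convergence with the uniform absolute continuity of $\|g\|_{L^m(\cdot)}$ through Vitali. Everything else is routine measure theory.
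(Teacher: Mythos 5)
Your argument is correct, and it reaches the conclusion by a genuinely different route than the paper. Both proofs pivot on the same H\"older pairing of $g\in L^m(\H^N)$ against the critical norm, $\int_E |g|\,|u_n-u|^q\,\dxi \le \|g\|_{L^m(E)}\|u_n-u\|_{L^{Q^*}}^q$, but they exploit it differently. The paper approximates $g$ in $L^m$ by a continuous, compactly supported $g_\epsilon$, splits $\int_{\H^N}|g|\,|u_n-u|^q\,\dxi$ into the small remainder term controlled by $\|g-g_\epsilon\|_{L^m}$ and a term over $\mathrm{supp}(g_\epsilon)$, and kills the latter directly with the compact embedding $\Dh \embd L^q_{\mathrm{loc}}(\H^N)$; no pointwise convergence or Vitali theorem is needed. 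You instead extract almost-everywhere convergence by a diagonal argument and then apply the Vitali convergence theorem, with uniform integrability and tightness both supplied by the absolute continuity and smallness-at-infinity of $E\mapsto\|g\|_{L^m(E)}$ (valid since $|g|^m\in L^1(\H^N)$); your "delicate point" remark about not being able to pair $g$ against a subcritical power globally is exactly the obstruction the paper's splitting also circumvents. What each buys: the paper's density argument is shorter and stays at the level of norm estimates, while yours avoids invoking density of $C_c(\H^N)$ in $L^m(\H^N)$ at the cost of a diagonal extraction and the (infinite-measure, tightness-included) form of Vitali's theorem; since compactness only requires a convergent subsequence, the extra subsequence extractions are harmless, and the two proofs are of essentially equal strength.
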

\begin{proof}
Let $\{u_n\}$ be any bounded sequence in $\Dh$. By reflexivity, $u_n \rightharpoonup u$ in $\Dh$ up to a subsequence for some $u \in \Dh$. By density, for every $\epsilon>0$ there exists a $g_\epsilon \in C_c(\H^N)$ (i.e., the space of continuous functions with compact support) such that
\begin{align*}
    \|g-g_\epsilon\|_{L^r}<\frac{\epsilon}{2A}, \text{ where } A:= \sup\limits_{n \in \N}\|u_n-u\, \|_{L^{Q^{*}}}^q.
\end{align*}
Assume that $B$ is the upper bound of $g_\epsilon$ on the compact set $\text{supp}(g_\epsilon)$. Then, using H\"{o}lder inequality, we get
\begin{align}\label{2.6}
     \int_{\H^N} g|u_n-u|^q\,\dxi 
    & \leq \|g-g_\epsilon\|_{L^r} \|u_n-u\|_{L^{Q^*}}^q + \int_{\text{supp}(g_\epsilon)}|g_\epsilon||u_n-u|^q\,\dxi \notag\\
    & \leq \frac{\epsilon}{2} + B\int_{\text{supp}(g_\epsilon)} |u_n-u|^q\,\dxi.
\end{align}
The second integral tends to zero as $n \rightarrow \infty$ using the compact embedding $\Dh \hookrightarrow L^{q}_{\text{loc}}(\H^N)$. Taking the limit as $n \rightarrow \infty$ into \eqref{2.6} and since $\epsilon >0$ is arbitrary, we conclude that the proof.
\end{proof}
\begin{remark}
    Note that if $g \in L^1(\H^N) \cap L^\infty(\H^N)$, then $g \in L^r(\H^N)$ for every $r\geq 1$. Hence, if $g$ satisfies {\rm\ref{g1}}, the embedding $\Dh \hookrightarrow L^{q}(\H^N,g), q\in [1,Q^*),$ is compact using Lemma \ref{compactness weight g}. 
\end{remark} 
Further, we prove that the functional $J_a$ admits a mountain pass geometry.
\begin{lemma}\label{MP1}
Let $f$ satisfies {\rm\ref{f1}}, {\rm \ref{f2}} and $g$ satisfies {\rm \ref{g1}}. Then
\begin{enumerate}[label=\rm(\alph*)]
       \item there exist $\beta, \delta, a_1>0$ such that $J_a(u) \geq \delta$ on $\norm{u}=\beta$ and $a \in (0,a_1)$.
       \item there exists $v \in \Dh$ such that $\|v\|>\beta$ and $J_a(v)<0$. 
   \end{enumerate} 
\end{lemma}
\begin{proof}
  (a) The estimate on $F_a$ given by \eqref{2.2} yields
\begin{align*}
    \int_{\H^N} g(\xi)F_a(u)\,\dxi \leq \int_{\H^N} g(\xi)\left(\epsilon |u|^2+ C(f,\epsilon) |u|^\gamma+ a|u| \right)\, \dxi \leq \epsilon C_1 \norm{u}^2 + C(f,\epsilon) C_2 \norm{u}^\gamma + a C_3 \|u\|,
\end{align*}  
where $C_1, C_2$ and $C_3$ are the embedding constants for $\Dh \hookrightarrow L^q( \H^N,g)$ (see Lemma \ref{compactness weight g}). For $\norm{u}=\beta$, we have
\begin{align}\label{2.12}
    J_a(u) \geq \frac{1}{2} \beta^2 - \epsilon C_1 \beta^2 - C(f,\epsilon) C_2 \beta^\gamma - a C_3 \beta = \beta^2 \left(\frac{1}{2} - \epsilon C_1 - C(f,\epsilon) C_2 \beta^{\gamma-2} \right)-a C_3 \beta.
\end{align}
We choose $\epsilon < (2C_1)^{-1}$. Now we can write $J_a(u) \geq A_1(\beta) - aC_3 \beta$, where $A_1(\beta) = C \beta^2 (1-\widetilde{C}\beta^{\gamma -2})$ with $C,\widetilde{C}$ independent of $a$. If we assume $\beta_1$ to be the non-trivial zero of $A_1$, then for $\beta < \beta_1$, we set $a_1 \in (0, \frac{A_1(\beta)}{C_3 \beta})$ and $\delta = A_1(\beta)-a_1 C_3 \beta$. Using \eqref{2.12}, we conclude that $J_a(u) \geq \delta$ for every $a \in (0,a_1)$. 

\noi(b) Assume that $\varphi \in C_c^\infty(\H^N) \setminus \{0\} \text{ with } \varphi \geq 0$ and $\|\varphi\|=1$. Taking $s \geq 0$, we write
    \[ J_a(s \varphi) = \frac{s^2}{2} \int_{\H^{N}}\,\abs{\nabla_\H \varphi}^2 \dxi
    - \int_{\H^N}g(\xi) \left(F(s \varphi) -as\varphi  \right) \,\dxi.\]
We use the Ambrosetti-Rabinowitz condition \eqref{2.4} of $F$ to establish the following estimates:
\begin{align}\label{2.13}
    J_a(s \varphi) &\leq \frac{s^2}{2} \|\varphi\|^2
    -M_1s^\vartheta \int_{\H^N}g(\xi)(\varphi(\xi))^\vartheta\,\dxi+ M_2 \int_{\H^N}g(\xi)\,\dxi+ as \int_{\H^N}g(\xi) \varphi(\xi)\,\dxi \notag\\
    &\leq  \frac{s^2}{2} -M_1 s^\vartheta \int_{\H^N}g(\xi)(\varphi(\xi))^\vartheta\,\dxi+ M_2 \|g\|_{L^1}  + as \norm{\varphi}_{L^{\infty}} \norm{g}_{L^1}.
\end{align}
Since $\vartheta >2$, we observe that $J_a(s \varphi) \rightarrow - \infty$ as $s \rightarrow +\infty$. Therefore, we can find a number $s_1$ such that $s_1 > \beta$ and $J_a(s\varphi)<0$ for $s>s_1$. Choosing $v= s\varphi$ for $s>s_1$ gives the required function.
\end{proof}

\begin{definition}
Let $X$ be a Banach space and $J:X \rightarrow \R$ be a continuously Fr\'{e}chet differentiable functional. Then, we call a sequence $\{u_n\}$ to be Palais-Smale for $J$ if $\{J(u_n)\}$ is bounded in $\R$ and $J'(u_n) \rightarrow 0$ in $X^*$ (dual space of $X$). Further, $J$ satisfies the Palais-Smale condition (in short (PS) condition) if every Palais-Smale sequence for $J$ has a convergent subsequence.
\end{definition} 

\begin{proposition} \label{Palais smale J_a}
Let $f$ and $g$ satisfy \rm{\ref{f1}}, \rm{\ref{f2}} and $g$. Then $J_a$ admits the Palais-Smale condition for every $a \geq 0$.
\end{proposition}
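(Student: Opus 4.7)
The plan is to follow the standard Palais--Smale argument adapted to our weighted setting: first establish boundedness of the sequence via the modified Ambrosetti--Rabinowitz relation \eqref{2.5}, then use the compact embedding of $\Dh$ into the weighted Lebesgue spaces $L^q(\H^N, g)$ (Lemma \ref{compactness weight g} together with the remark, using {\rm\ref{g1}}) to upgrade weak convergence to strong convergence on the nonlinear term, and finally exploit the Hilbert structure of $\Dh$ to conclude.

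Let $\{u_n\} \subset \Dh$ be a Palais--Smale sequence, i.e.\ $|J_a(u_n)| \leq C$ and $J_a'(u_n) \to 0$ in $(\Dh)^*$. For boundedness, I would compute
\[
\vartheta J_a(u_n) - \langle J_a'(u_n), u_n\rangle = \Bigl(\tfrac{\vartheta}{2}-1\Bigr)\|u_n\|^2 + \int_{\H^N} g(\xi)\bigl[u_n f_a(u_n) - \vartheta F_a(u_n)\bigr]\dxi,
\]
and use \eqref{2.5} to bound the second integrand below by $-M_3$, so that the right side dominates $(\tfrac{\vartheta}{2}-1)\|u_n\|^2 - M_3\|g\|_{L^1}$. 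Since $\vartheta > 2$ and the left side is bounded by $C' + \|J_a'(u_n)\|_{(\Dh)^*}\|u_n\|$, a standard absorption argument yields $\sup_n \|u_n\| < \infty$.

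Next, by reflexivity extract $u_n \rightharpoonup u$ in $\Dh$ (up to subsequence). By the remark following Lemma \ref{compactness weight g}, $u_n \to u$ strongly in $L^q(\H^N, g)$ for every $q \in [1, Q^*)$; in particular for $q = 1$ and $q = \gamma$. The key step is to show that $\int_{\H^N} g(\xi) f_a(u_n)(u_n - u)\,\dxi \to 0$. Using \eqref{2.3} and Hölder's inequality with exponents $\gamma/(\gamma-1)$ and $\gamma$,
\[
\Bigl| \int_{\H^N} g f_a(u_n)(u_n-u)\dxi \Bigr| \leq C\int_{\H^N} g|u_n - u|\dxi + C\Bigl(\int_{\H^N} g|u_n|^\gamma \dxi\Bigr)^{\!\frac{\gamma-1}{\gamma}}\!\Bigl(\int_{\H^N} g|u_n - u|^\gamma \dxi\Bigr)^{\!\frac{1}{\gamma}}\!.
\]
The first factor in the product is bounded because $\{u_n\}$ is bounded in $\Dh$ and $\Dh \hookrightarrow L^\gamma(\H^N,g)$ continuously, while the remaining two integrals tend to zero by strong convergence in $L^1(\H^N,g)$ and $L^\gamma(\H^N,g)$.

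To close the argument, write
\[
\langle J_a'(u_n), u_n - u\rangle = \int_{\H^N} \nabla_\H u_n \cdot \nabla_\H(u_n - u)\dxi - \int_{\H^N} g f_a(u_n)(u_n - u)\dxi.
\]
The left side tends to zero since $J_a'(u_n)\to 0$ and $\{u_n-u\}$ is bounded; the second integral on the right tends to zero by the estimate above. Hence $\int_{\H^N}\nabla_\H u_n \cdot \nabla_\H(u_n - u)\dxi \to 0$, and since $u_n \rightharpoonup u$ gives $\int \nabla_\H u_n \cdot \nabla_\H u \,\dxi \to \|u\|^2$, we conclude $\|u_n\|^2 \to \|u\|^2$. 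Combined with weak convergence in the Hilbert space $\Dh$, this yields $u_n \to u$ strongly. The main obstacle is the nonlinear term: the $L^1$-piece accounts for the additive constant $a$ in $f_a$ (which is why {\rm\ref{g1}} requires $g \in L^1 \cap L^\infty$), while the growth piece $|s|^{\gamma-1}$ is handled by the Hölder decomposition above.
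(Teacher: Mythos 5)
Your proposal is correct and follows essentially the same route as the paper: boundedness via the modified Ambrosetti--Rabinowitz relation \eqref{2.5}, strong convergence of the nonlinear term via the compact embedding $\Dh \hookrightarrow L^q(\H^N,g)$ of Lemma \ref{compactness weight g}, and the Hilbert-space structure (norm convergence plus weak convergence) to conclude. The only cosmetic difference is your choice of H\"older exponents $(\gamma/(\gamma-1),\gamma)$ in the weighted space instead of the paper's pair $(Q^*/(\gamma-1),\,Q^*/(Q^*-(\gamma-1)))$, which changes nothing of substance.
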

\begin{proof}
Let $\{u_n\}$ be a Palais Smale sequence for $J_a$. In order to prove $\{u_n\}$ has a strongly convergent subsequence in $\Dh$, first we claim that $\{u_n\}$ is bounded in $\Dh$. Notice that $|J_a(u_n)|\leq M$ for some $M>0$ and subsequently we have
\begin{equation}\label{2.14}
    \frac{1}{2}\norm{u_n}^2 - \int_{\H^N} g(\xi)F_a(u_n)\,\dxi\leq M, \; \text{ for all } \, n\in \N.
\end{equation}
Next, using $J_a'(u_n) \rightarrow 0$ in $(\Dh)^*$, we can find $n_1 \in \N$ such that $\abs{\langle J_a'(u_n), u_n \rangle} \leq \norm{u_n}$, for all $n \geq n_1$. Hence, we get
\begin{equation}\label{2.15}
    -\norm{u_n} -\norm{u_n}^2 \leq -\int_{\H^N} g(\xi)f_a(u_n)u_n\,\dxi, \; \text{ for all } \, n \geq n_1.
\end{equation}
Combining \eqref{2.5} and \eqref{2.14} gives
\begin{equation}\label{2.16}
    \frac{1}{2}\norm{u_n}^2 - \frac{1}{\vartheta}\int_{\H^N} g(\xi)f_a(u_n)u_n\,\dxi- \frac{1}{\vartheta} M_3\norm{g}_{L^1}\leq M,
\end{equation}
and further by \eqref{2.15} and \eqref{2.16},
\begin{align*}
    \bigg(\frac{1}{2}-\frac{1}{\vartheta} \bigg)\norm{u_n}^2 -\frac{1}{\vartheta}\norm{u_n} \leq M + \frac{1}{\vartheta}M_3 \norm{g}_{L^1}, \; \text{ for all } \, n \geq n_1.
\end{align*}
From the above inequality, we deduce that $\{u_n\}$ is bounded in $\Dh$. Hence, there exists some $u \in \Dh$ such that up to a subsequence $u_n \rightharpoonup u$ in $\Dh$. We define a functional $J: \Dh \rightarrow \R$ as $J(u) = \frac{1}{2}\|u\|^2$. Notice that $J$ is continuously differentiable on $\Dh$. Moreover, we can write 
\begin{align*}
    \langle J'(u_n), u_n -u \rangle = A_n + B_n.
\end{align*}
where $A_n = \langle J_a'(u_n),u_n-u \rangle$ and $B_n= \langle  K_a'(u_n), u_n-u \rangle$. Next we claim that $ \langle J'(u_n), u_n -u \rangle \rightarrow 0$ as $n \rightarrow \infty$. It is enough to show that $A_n, B_n \rightarrow 0$ as $n \rightarrow \infty$. Since $J_a'(u_n) \rightarrow 0 \text{ in } (\Dh)^*$ and $\{u_n\}$ is bounded in $\Dh$, we have $ \abs{A_n}  \leq \|J_a'(u_n)\|_{*} \|u_n-u\| \rightarrow 0$ as $n \rightarrow \infty$. Consequently, $A_n \rightarrow 0$ as $n \rightarrow \infty$. Next, we show $B_n \rightarrow 0$ as $n \rightarrow \infty$. Using \eqref{2.3}, we have
 \begin{align} \label{2.17}
     |B_n| & \leq \int_{\H^N} g(\xi)|f_a(u_n)||u_n-u|\, \dxi  \leq C(f,a) \int_{\H^N} g(\xi) \left( 1 + |u_n|^{\gamma-1} \right) |u_n-u|\, \dxi.
 \end{align} 
 The sequence $\{|u_n|^{\gamma -1}\}$ is bounded in $L^{\frac{Q^*}{\gamma-1}}(\H^N)$. Further, 
 \begin{align*}
     \gamma < Q^* \Longleftrightarrow \frac{Q^*}{Q^*-(\gamma-1)} < Q^*.
 \end{align*}
 From Lemma \ref{compactness weight g} and {\rm \ref{g1}}, we get $u_n \ra u$ in $L^{\frac{Q^*}{Q^*-(\gamma-1)}}(\H^N,g)$. Further, the H\"{o}lder's inequality with conjugate pair $(\frac{Q^*}{\gamma-1}, \frac{Q^*}{Q^*-(\gamma-1)})$ gives
 \begin{align}
     \int_{\H^N} g(\xi)|u_n-u||u_n|^{\gamma-1}\,\dxi\leq \norm{g}_{L^\infty}^{\frac{\gamma-1}{Q^*}} \norm{u_n-u}_{L^{\frac{Q^*}{Q^*-(\gamma-1)}}(\H^N,g)} \||u_n|^{\gamma-1}\|_{L^\frac{Q^*}{\gamma-1}} \rightarrow 0 \text{ as } n \rightarrow \infty.
 \end{align} 
Moreover, Lemma \ref{compactness weight g} infers that $\int_{\H^N} g(\xi)|u_n-u|\,\dxi\rightarrow 0$ as $n \ra \infty$. Consequently, we get
 \begin{align}\label{2.19}
     \int_{\H^N} g(\xi)\left( 1 + |u_n|^{\gamma-1}   \right)|u_n-u| \,\dxi\rightarrow 0 \text{ as } n \rightarrow \infty.
 \end{align}
From \eqref{2.17} and \eqref{2.19}, we deduce that $B_n \rightarrow 0$ as $n \rightarrow \infty$. Since $J \in \C^1(\Dh, \R)$ and $u_n \rightharpoonup u$ in $\Dh$, we also get $\langle J'(u), u_n -u \rangle \rightarrow 0$ as $n \rightarrow \infty$. Using H\"{o}lder inequality, we estimate
\begin{align}\label{2.20}
     \langle J'(u_n), u_n -u \rangle - \langle J'(u), u_n -u \rangle &= \int_{\H^N} |\nabla_\H u_n|^2 \, \dxi + \int_{\H^N} |\nabla_\H u|^2 \, \dxi - 2\int_{\H^N} \nabla_\H u \cdot \nabla_\H u_n\, \dxi  \notag\\
     & \geq \norm{u_n}^2 + \norm{u}^2 - 2\norm{u_n}\norm{u} \notag\\
     & \geq 0.
 \end{align}
Taking limit as $n \rightarrow \infty$ in \eqref{2.20}, we get
 $\norm{u_n} \rightarrow \norm{u}$. Hence, we conclude
 $u_n \rightarrow u$ in $\Dh$. 
\end{proof}
\section{Existence and regularity of solutions} \label{existence and regularity}
\begin{proof}[Proof of Theorem \ref{Existence and uniform boundedness}]
We choose $\delta,v$ given in Lemma \ref{MP1}. Notice that $J_a$ satisfies all the hypotheses of the mountain pass theorem \cite[Theorem 2.1]{Ambrosetti1973} for every $a \in (0,a_1)$ due to Lemma \ref{MP1} and Proposition \ref{Palais smale J_a}. Hence, $J_a$ admits a mountain pass critical point $u_a \in \Dh$ such that
   \begin{equation} \label{3.1}
       J_a(u_a) = c_a = \inf\limits_{\gamma \in \Gamma_v}\max\limits_{s \in [0,1]}J_a(\gamma(s)) \geq \delta \text{  and  } J_a'(u_a)=0,
   \end{equation}
where $\Gamma_v := \{ \gamma \in C([0,1], \Dh):\gamma(0)=0  \text{ and } \gamma(1)=v \}$ and $c_a$ is the mountain pass level associated with $J_a$. Now we prove the uniform boundedness of solutions $\{u_a\}$ in $\Dh$. In order to prove the uniform boundedness, first, we need to establish the uniform boundedness of $\{J_a(u_a): a \in (0,a_1)\}$. We define a path $\Tilde{\gamma}:[0,1] \rightarrow \Dh$ by $\Tilde{\gamma}(\sigma)=\sigma v$, where $v=s\varphi$ for some $s>s_1$ (for $s_1$ as in Lemma \ref{MP1}-(b)), $\varphi \in \C_c^{\infty}(\H^N)\setminus\{0\},~ \varphi \geq 0$ and $\norm{\varphi}=1$. Observe that $\Tilde{\gamma} \in \Gamma_v$ since $\Tilde{\gamma}(0)=0$ and $\Tilde{\gamma}(1)=v$. We obtain from \eqref{3.1} that
\begin{equation} \label{3.2}
       J_a(u_a) = c_a = \inf\limits_{\gamma \in \Gamma_v}\max\limits_{\sigma \in [0,1]}J_a(\gamma(\sigma)) \leq \max\limits_{\sigma \in [0,1]}J_a(\Tilde{\gamma}(\sigma)) = \max\limits_{\sigma \in [0,1]}J_a(\sigma s\varphi).
   \end{equation}
Following the calculations as in \eqref{2.13}, we deduce that
 \begin{align}\label{3.3}
    J_a(\sigma s \varphi) &\leq \frac{{\sigma}^2s^2}{2} \norm{\varphi}^2
    -M_1 {\sigma}^\vartheta s^\vartheta \int_{\H^N}g(\xi)(\varphi(x))^\vartheta\,\dxi + M_2 \int_{\H^N}g(\xi)\,\dxi+ a {\sigma} s \int_{\H^N}g(\xi) \varphi(x)\,\dxi  \notag\\
    &\leq  \frac{s^2}{2} + M_2 \|g\|_{L^1}  + a_1 s C_1 \norm{\varphi}.
\end{align}
From \eqref{3.2} and \eqref{3.3}, there exists $C= C(N,s,M_2,g,a_1)$ satisfying
 \begin{equation}\label{3.4}
     J_a(u_a) \leq C, \text{ for all } a \in (0,a_1).
 \end{equation}
From \eqref{3.1}, we write
 \begin{equation}\label{3.5}
     \norm{u_a}^2 - \int_{\H^N}g(\xi)f_a(u_a)u_a\,\dxi = 0.
 \end{equation}
 Also, by \eqref{3.4} we have
 \begin{equation}\label{3.6}
     \frac{1}{2}\norm{u_a}^2 - \int_{\H^N}g(\xi)F_a(u_a)\,\dxi \leq C.
 \end{equation}
From \eqref{3.5} and \eqref{3.6}, we deduce the following
\begin{align*}
    \left(\frac{1}{2}-\frac{1}{\vartheta}\right)\norm{u_a}^2 + \int_{\H^N}g(\xi) \left(\frac{1}{\vartheta}f_a(u_a)u_a- F_a(u_a) \right)\,\dxi \leq C.
\end{align*}
We combine the above estimate with \eqref{2.5} to get
\begin{align*}
    \left(\frac{1}{2}-\frac{1}{\vartheta}\right)\norm{u_a}^2 -\frac{1}{\vartheta}M_3\|g\|_{L^1} \leq C.
\end{align*}
Hence, we can conclude that $\norm{u_a} \leq C$ for every $a \in (0,a_1)$, where $C=C(N,s,g,M_2,M_3,a_1)$.
\end{proof} 

\begin{proof}[Proof of Theorem \ref{Regularity-of-solutions}]
We only need to show that for any sequence $a_j \rightarrow 0$, the sequence of solutions $\{u_j=u_{a_j}\}$ possesses a subsequence that is bounded in $L^\infty(\H^N)$. Since $\{u_j\}$ is bounded in $\Dh$ (See Theorem \ref{Existence and uniform boundedness}), there exists some $u \in \Dh$ such that
\begin{align*}
    u_j \rightharpoonup u \quad &\text{ in } \Dh,\\
    u_j \rightarrow u \quad &\text{ a.e in } \H^N.
\end{align*}
Following similar arguments as in Proposition \ref{Palais smale J_a}, we deduce that $u_j \rightarrow u$ in $\Dh$ and consequently $u_j \rightarrow u$ in $L^{Q^*}(\H^N)$ by the Sobolev embedding $\Dh \hookrightarrow L^{Q^*}(\H^N)$. Moreover, there is some $h \in L^{Q^*}(\H^N)$ such that up to a subsequence we have
\begin{align*}
    |u_j(\xi)| \leq h(\xi) \text{ a.e. in } \H^N \text{ and for all } j \in \mathbb{N}. 
\end{align*}
We define $V_j(\xi) = g(\xi) \tfrac{\left( 1+ |u_j|^{\frac{Q+2}{Q-2}}\right)}{1+|u_j|}$. Then we have
\begin{align*}
    |V_j(\xi)| \leq g(\xi)\left(1+ |u_j(\xi)|^{\frac{4}{Q-2}} \right) \leq g(\xi)\left(1+ |h(\xi)|^{\frac{4}{Q-2}} \right).
\end{align*}
Since $h \in L^{Q^*}(\H^N)$, we obtain $V_j \in L^{\frac{Q}{2}}(\H^N)$. Moreover, $V_j \in L^1(\H^N)$. Therefore, the interpolation inequality yields
\begin{align*}
    V_j \in L^r(\H^N) \text{ for every } r \in \left[1, \frac{Q}{2} \right].
\end{align*}
By the dominated convergence theorem, we get
$V_j \rightarrow V$ in $L^r(\H^N)$ for $r \in \left[1, \frac{Q}{2} \right]$. Notice that
\begin{align*}
      |g(\xi)f_{a_j}(u_j)| \leq |g(\xi)|\left(\varepsilon |u_j|  + C_\varepsilon |u_j|^{\frac{Q+2}{Q-2}} + a_j\right).
\end{align*}
Since $V_j \rightarrow V$ a.e. in $\H^N$, there exists a $j_0$ large enough such that for $j \geq j_0,$ we have
\begin{align*}
     |g(\xi)f_{a_j}(u_j)| &\leq C|g(\xi)| \left( 1 + |u_j|^{\frac{Q+2}{Q-2}} \right) \leq C V(\xi) \left( 1 + |u_j| \right).
\end{align*}
We claim that  for any $q \in [Q^*, \infty),$ there exists a constant $K_q$ such that $\|u_j\|_{L^q} \leq K_q$ for all $j \geq j_0$. First we prove that if $u_j \in L^q(\mathbb{H}^N)$ for some $q \geq Q^*,$ then $u_j \in L^{\frac{Q^*}{2}q}(\H^N)$. So, we assume that $u_j \in L^q(\mathbb{H}^N)$. Notice that we can write $u_j=u_j^+ -u_j^-$, where $u_j^+:= \max\{u_j,0\}$ and $u_j^-:= \max\{-u_j,0\}$. For $L \geq 0$, we define $G(u_j^+) := G_L(u_j^+)=u_j^+ \min\{(u_j^+)^{\frac{q}{2}-1}, L\}$, $F(u_j^+):=F_L(u_j^+)=\int\limits_0^{u_j^+} |G'(s)|^2\,\mathrm{d} s$. Then by definition \eqref{vector-fields}, we have
\begin{align*}
    X_iF(u_j^+)&=|G'(u_j^+)|^2X_iu_j^+, \\
    Y_iF(u_j^+)&=|G'(u_j^+)|^2Y_iu_j^+,
\end{align*}
where $X_i$ and $Y_i$ are vector fields as given in \eqref{vector-fields}. Using the definition \eqref{Horizontal-Grad}, we get $\nabla_\mathbb{H} F(u_j^+) = |G'(u_j^+)|^2\nabla_\mathbb{H} u_j^+$, and
\begin{align}\label{rev-e1}
    \nabla_\mathbb{H} F(u_j^+) \cdot \nabla_\mathbb{H}u_j^+ &= |G'(u_j^+)|^2 |\nabla_\mathbb{H} u_j^+|^2 \notag\\
    &= |G'(u_j^+) \nabla_\mathbb{H} u_j^+|^2 \notag\\
    &= |\nabla_\mathbb{H}G(u_j^+)|^2.
\end{align}
Using \eqref{rev-e1}, we get
\begin{align*}
\int\limits_{\mathbb{H}^N}\nabla_{\mathbb{H}} u_j \cdot \nabla_{\mathbb{H}}F(u_j^+)\, \dxi
&= \int\limits_{\mathbb{H}^N}\nabla_{\mathbb{H}} u_j^+ \cdot \nabla_{\mathbb{H}}F(u_j^+)\, \dxi - \int\limits_{\mathbb{H}^N}\nabla_{\mathbb{H}} u_j^- \cdot \nabla_{\mathbb{H}}F(u_j^+)\, \dxi\\
&= \int\limits_{\mathbb{H}^N} |\nabla_{\mathbb{H}}G(u_j^+)|^2\, \dxi.
\end{align*} 
The second integral on the right hand side in the first equality vanishes since $u_j^+$ and $u_j^-$ can not be simultaneously non-zero.
Moreover, we have the following estimate
\begin{align*}
    u_j^+ F(u_j^+)\leq C_q (G(u_j^+))^2 \leq  C_q  (u_j^+)^q,
 \end{align*}
 where $C_q= {\frac{q^2}{4(q-1)}}$.  
Taking $F(u_j^+)$ as a test function and $j \geq j_0$, we get
\begin{align*}
    \int\limits_{\mathbb{H}^N} \nabla_\mathbb{H} u_j \cdot \nabla_\mathbb{H}F(u_j^+) d\xi  
    &\leq C\int\limits_{\mathbb{H}^N} V( \xi) \left(1 + |u_j|\right) u_j^+ \min \left\{(u_j^+)^{q-2}, L^2 \right\}   \dxi \\
&\leq C\int\limits_{\mathbb{H}^N} V( \xi) \left(1 + 2 (u_j^+)^2 \right) \min \left\{(u_j^+)^{q-2}, L^2 \right\}   \dxi \\
&\leq C\int\limits_{\mathbb{H}^N} V( \xi)   \dxi + 3 \int\limits_{\mathbb{H}^N} V( \xi) (u_j^+)^2 \min \left\{(u_j^+)^{q-2}, L^2 \right\}   \dxi.
\end{align*}
Since $u_j^+ \in L^q(\mathbb{H}^N)$, then for any  $K \geq 1 $
\begin{align*}
\int\limits_{\mathbb{H}^N} |\nabla_\mathbb{H} G(u_j^+)|^2   \dxi &\leq C + 3 \int\limits_{\{ V(\xi) \leq K\}} V(\xi) (u_j^+)^2 \min \left\{ (u_j^+)^{q-2}, L^2 \right\}   \dxi + 3 \int\limits_{\{ V(\xi) \geq K\}} V(\xi) (u_j^+)^2 \min \left\{ (u_j^+)^{q-2}, L^2 \right\}   \dxi\\
 &\leq C + 3K \int\limits_{\{ V(\xi) \leq K\}} (u_j^+)^q  \dxi + 3 \int\limits_{\{ V(\xi) \geq K\}} V(\xi) (u_j^+)^2 \min \left\{ (u_j^+)^{q-2}, L^2 \right\}   \dxi\\
 &\leq C(1 + K) + C \left(\int\limits_{\{ V(\xi) \geq K\}} V(\xi)^{\frac{Q}{2}} \dxi \right)^{\frac{2}{Q}} \left( \int\limits_{\{ V(\xi) \geq K\}} \left((u_j^+) \min \left\{ (u_j^+)^{\frac{q}{2}-1}, L \right\}\right)^{\frac{2Q}{Q-2}}   \dxi\right)^{\frac{Q-2}{Q}}\\
 &\leq C(1 + K) + C \left(\int\limits_{\{ V(\xi) \geq K\}} V(\xi)^{\frac{Q}{2}} \dxi \right)^{\frac{2}{Q}} \left( \int\limits_{\{ V(\xi) \geq K\}} |G(u_j^+)|^{\frac{2Q}{Q-2}}   \dxi\right)^{\frac{Q-2}{Q}}\\
 &\leq  C(1 + K) + C \varepsilon(K) \int\limits_{\mathbb{H}^N}  |\nabla_{\mathbb{H}}G(u_j^+)|^2 \dxi,
\end{align*}
where $\varepsilon(K) := \left(\int\limits_{\{ V(\xi) \geq K\}} V(\xi)^{\frac{Q}{2}} \dxi \right)^{\frac{2}{Q}}.$ We have that $\varepsilon(K) \to 0$ as $K \to \infty.$ Fix $K$ such that $\varepsilon(K) \leq \frac{1}{2C}$ so that 
\begin{align}\label{monotone bound}
    \int\limits_{\mathbb{H}^N} |\nabla_\mathbb{H} G(u_j^+)|^2   \dxi \leq C,
\end{align}
where $C$ depends on $\|u_j^+\|_{L^q}$ which is independent of $L$. Since $G(u_j^+)$ is non-decreasing function with respect to $L$ and \eqref{monotone bound} holds, the monotone convergence theorem yields $G(u_j^+) \to (u_j^+)^\frac{q}{2}$ in $\Dh$ as $L \rightarrow \infty$. Due to the embedding $\Dh \hookrightarrow L^{Q^*}(\H^N)$, we get $u_j^+ \in L^{\frac{Q^*}{2}q}(\H^N)$. Similarly, if we replace $u_j^+$ with $u_j^-$ in the definition of $G(u_j^+)$ and $F(u_j^+)$, we obtain that $u_j^- \in L^{\frac{Q^*}{2}q}(\H^N)$. Hence, we deduce that $u_j \in L^{\frac{Q^*}{2}q}(\H^N)$. Now iterating the same process for different $q$ starting from $Q^*$, we obtain $u_j\in L^q(\mathbb{H}^N)$ for all $q\in [Q^*,\infty)$.

We now claim that there exists $C>0$ such that $\norm{u_j}_{L^\infty}\le C$ for all $j\ge j_0$.
By the previous claim, we have that $u_j\in L^q(\mathbb{H}^N)$ for all $q\in [Q^*,\infty)$.
Consider,
\begin{align*}
    \int\limits_{\mathbb{H}^N}  V(\xi)^q \dxi &\leq C \left(\int\limits_{\mathbb{H}^N} g(\xi)^q \dxi + \int\limits_{\mathbb{H}^N} g(\xi)^q|u_j|^{\frac{Q+2}{Q-2}q} \dxi \right) \leq C,
\end{align*}
where we have used \ref{g1} and H\"{o}lder's inequality.
Hence, $V\in L^q(\mathbb{H}^N), q\in [1,\infty)$. Now, we again argue for $u_j^+$. Let $s_0>Q/2$. We construct a sequence $\|u_j^+\|_{L^{q_k}}$ with $q_k\to\infty$, which are uniformly bounded by $\|u_j^+\|_{L^{q_0}}$, where $q_0 = s_0'Q^*$ and $s_0'$ is such that $\frac{1}{s_0}+ \frac{1}{s_0'} = 1$. As in the previous step, again using the test function $F(u_j^+),$ we have
\begin{align*}
\int\limits_{\mathbb{H}^N} V(\xi)(1+|u_j|)F(u_j^+) d\xi & \textcolor{magenta}{=} \int\limits_{\mathbb{H}^N} V(\xi)|u_j|F(u_j^+) \dxi + \int\limits_{\mathbb{H}^N} V(\xi)F(u_j^+) \dxi \\
&\leq \|V\|_{L^{s_0}} \||u_j|F(u_j^+)\|_{L^{s'_0}} +C_q \int\limits_{\mathbb{H}^N}  V(\xi) (u_j^+)^{q-1} d\xi \\
&\leq C_q\|V\|_{L^{s_0}} \|u_j^+\|_{L^{qs'_0}}^q + C_q\|V\|_{L^{\tilde{s}_0}}\|u_j^+\|_{L^{qs'_0}}^{q-1} \text{ where } \tilde{s}_0=\frac{q s_0}{q s_0 -q + 1} > \frac{Q}{2} \\
&\leq C_q\left(\|V\|_{L^{s_0}} + \|V\|_{L^{\tilde{s}_0}}\|u_j^+\|_{L^{qs'_0}}^{-1}\right)\|u_j^+\|_{L^{qs'_0}}^{q} \\
&\leq \tilde{C}(q,V,u_j)C_q\|V\|_{L^{s_0}}\|u_j^+\|_{L^{qs'_0}}^{q}, 
\end{align*}
which implies
\begin{align*}
\int\limits_{\mathbb{H}^N} |\nabla_\mathbb{H} G(u_j^+)|^2   \dxi \leq \int\limits_{\mathbb{H}^N} V(\xi)(1+|u_j|)F(u_j^+) d\xi \leq \tilde{C}C_q\|V\|_{L^{s_0}}\|u_j^+\|_{L^{qs'_0}}^{q}.
\end{align*}
The Sobolev inequality gives,
\begin{align*}
    \|G(u_j^+)\|^2_{L^{Q^*}} \leq \tilde{C}C_q\|V\|_{L^{s_0}}\|u_j^+\|_{L^{qs'_0}}^{q}.
\end{align*}
Taking $L \to \infty$ and applying the Monotone convergence theorem, we get
\begin{align*}
    \|u_j^+\|^q_\frac{Q^*q}{2} \leq C C_q\|V\|_{L^{s_0}}\|u_j^+\|_{L^{qs'_0}}^{q}.
\end{align*}
Taking $\theta = \frac{Q^*}{2s'_0} > 1$, we get
\begin{align*}
    \|u_j^+\|_{L^{\theta s'_0 q}} \leq (C C_q)^{\frac{1}{q}}\|V\|^{\frac{1}{q}}_{L^{s_0}}\|u_j^+\|_{L^{qs'_0}}.
\end{align*}
Define $q_0=Q^*s'_0$ and $q_k = \theta^k q_0$ for $k \geq 1$. Notice that $C_q=\frac{q^2}{4(q-1)}\leq Cq,$ where $C$ is independent of $q$.
\begin{align*}
     \|u_j^+\|_{L^{q_0 \theta}} \leq (Cq_0)^{\frac{1}{q_0}}\|V\|_{L^{s_0}}^{\frac{1}{q_0}}  \|u_j^+\|_{L^{q_0}}.
\end{align*}
\begin{align*}
    \|u_j^+\|_{L^{q_k}} \leq \left({\displaystyle \prod_{i=0}^{k-1}} (Cq_i)^{\frac{1}{q_i}}\right) \|V\|_{L^{s_0}}^{\sum\limits_{i=0}^{k-1} {\frac{1}{q_i}}} \|u_j^+\|_{L^{q_0}}.
\end{align*}
Recall that $q_k \to \infty$ as $k \to \infty$ and
\begin{align*}
    \sum_{i=0}^{\infty} \frac{1}{q_i} = \frac{1}{q_0} \sum_{i=0}^{\infty} \frac{1}{\theta^i}< \infty \quad \text{and} \quad \sum_{i=0}^{\infty} \frac{\log q_i}{q_i} < \infty.
\end{align*}
Hence, $\|u_j^+\|_{L^{\infty}} \leq C$. Using similar argument, we can prove that $\|u_j^-\|_{L^{\infty}} \leq C$. Finally,
\begin{align*}
    \|u_j\|_{L^{\infty}} \leq \|u_j^+\|_{L^{\infty}} + \|u_j^-\|_{L^{\infty}} \leq C, \forall j \geq j_0.
\end{align*}
Since $a_j \rightarrow 0$ as $j \rightarrow \infty$, there exists some $a_2 \in (0,a_1)$ such that $\|u_a\|_{L^{\infty}} \leq C$, for all $a \in (0,a_2)$.
The continuity of the solution is established through Morrey's embedding and the regularity results \cite[Theorem 5.15 and Theorem 6.1]{Folland1975}.
\end{proof}

\begin{lemma}\label{lemma 2.8}
    There exists $C_1>0$ such that $\|u_a\| \geq C_1$ for all $a \in (0, a_1)$, where $a_1$ is given in Theorem \ref{Existence and uniform boundedness}.
\end{lemma}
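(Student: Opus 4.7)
The plan is to exploit the mountain pass lower estimate $J_a(u_a)=c_a\ge \delta$ obtained in Lemma \ref{MP1}(a) and Theorem \ref{Existence and uniform boundedness}, and to combine it with the growth estimate \eqref{2.2} on $F_a$ and the weighted compact embedding of Lemma \ref{compactness weight g} to force $\|u_a\|$ to stay away from $0$ uniformly in $a\in(0,a_1)$.

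First, I would write
\[
\delta \le c_a = J_a(u_a) = \tfrac{1}{2}\|u_a\|^2 - \int_{\H^N} g(\xi) F_a(u_a)\,\dxi \le \tfrac{1}{2}\|u_a\|^2 + \int_{\H^N} g(\xi) |F_a(u_a)|\,\dxi .
\]
Next, using \eqref{2.2} with a fixed $\epsilon>0$ (say $\epsilon=1$) together with the continuous embeddings $\Dh\hookrightarrow L^q(\H^N,g)$ for $q=1,2,\gamma$ provided by Lemma \ref{compactness weight g} (whose constants I denote $C_1,C_2,C_3$), I get
\[
\int_{\H^N} g(\xi) |F_a(u_a)|\,\dxi \le \epsilon C_1 \|u_a\|^2 + C(f,\epsilon)C_2 \|u_a\|^{\gamma} + a\,C_3\|u_a\|.
\]
Combining the two displays and using $a<a_1$, I arrive at
\[
\delta \le \Bigl(\tfrac{1}{2}+\epsilon C_1\Bigr)\|u_a\|^2 + C(f,\epsilon)C_2 \|u_a\|^{\gamma} + a_1 C_3 \|u_a\|.
\]

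The right-hand side is a continuous function $\phi(\|u_a\|)$ of $\|u_a\|$, with $\phi(0)=0$ and coefficients independent of $a\in(0,a_1)$, while the left-hand side is the strictly positive constant $\delta$. Since $\phi$ is continuous and $\phi(t)\to 0$ as $t\to 0^+$, there exists $C_1>0$ such that $\phi(t)<\delta$ for all $t\in[0,C_1)$. Therefore $\|u_a\|\ge C_1$ for every $a\in(0,a_1)$, as required. The only subtle point is to ensure the constant $C_1$ does not depend on $a$, which is guaranteed by replacing $a$ with the uniform upper bound $a_1$ in the linear term; no other step presents a real obstacle.
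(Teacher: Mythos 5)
Your argument is correct, but it takes a slightly different route than the paper. Both proofs start from the mountain pass level estimate $\delta\le J_a(u_a)=\tfrac12\|u_a\|^2-\int_{\H^N}g(\xi)F_a(u_a)\,\dxi$, but you then control the nonlinear term by the full growth bound \eqref{2.2} on $|F_a|$ (quadratic, $\gamma$-power and linear terms, via the embeddings of Lemma \ref{compactness weight g}), arriving at $\delta\le\phi(\|u_a\|)$ with $\phi$ continuous, $\phi(0)=0$ and coefficients independent of $a\in(0,a_1)$, which forces $\|u_a\|$ away from zero. The paper instead uses only the elementary lower bound $F_a(s)\ge -a|s|$, which gives $\delta\le\tfrac12\|u_a\|^2+aC\|u_a\|=\|u_a\|\bigl(\tfrac12\|u_a\|+aC\bigr)$, and then invokes the uniform upper bound $\|u_a\|\le\widetilde C$ from Theorem \ref{Existence and uniform boundedness} to divide and obtain the explicit constant $C_1=\delta/(\tfrac12\widetilde C+a_1C)$. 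The trade-off: your proof does not rely on the a priori uniform boundedness of $\{u_a\}$ at all (only on $J_a(u_a)\ge\delta$ and the superlinear growth estimate), so it is more self-contained, whereas the paper's proof is shorter, uses only the negative part of $F_a$, and produces an explicit lower bound rather than one defined implicitly by $\phi(t)<\delta$. One cosmetic point: you use $C_1$ both for the embedding constant of $\Dh\hookrightarrow L^2(\H^N,g)$ and for the final lower bound; rename one of them to avoid the clash.
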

\begin{proof}
Notice that $F_a(s) \geq -a|s|$ for all $s \in \mathbb{R}$. Notice that $u_a$ is a solution to \eqref{Main problem} for all $a \in (0,a_1)$. Therefore, using the mountain pass geometry and $\delta>0$ as given in Lemma \ref{MP1}, we write
\begin{align}\label{delta estimate}
     J_a(u_a) \geq \delta \text{ for all } a \in (0, a_1).
\end{align}
Further, we have the following estimate
\begin{align*}
    \delta &\leq J_a(u_a) = \frac{1}{2} \|u_a\|^2 - \int\limits_{\mathbb{H}^N} g(\xi) F_a(u_a) \dxi \leq  \frac{1}{2} \|u_a\|^2 + a\int\limits_{\mathbb{H}^N} g(\xi) |u_a| \dxi \leq  \frac{1}{2} \|u_a\|^2 + a C \|u_a\|.
\end{align*}
The above inequality further implies
\begin{align*}
    \|u_a\| \geq \frac{\delta}{\frac{1}{2}\|u_a\| + a_1C} \geq \frac{\delta}{\frac{1}{2}\widetilde{C} + a_1C} := C_1.
\end{align*}
The last inequality follows using the uniform boundedness of the solutions in $\Dh$.
\end{proof}
\begin{lemma} \label{linfinity lower bound}
There exists $\beta>0$ such that $\|u_a\|_{L^{\infty}} \geq \beta$ for all $a \in (0, a_2)$, where $\beta$ is independent of $a$.
\end{lemma}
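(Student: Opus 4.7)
The plan is to argue by contradiction. Suppose no such $a_3, \beta$ exist; then along some sequence $a_j \to 0^+$ with $a_j \in (0, a_2)$, the corresponding mountain-pass solutions $u_j := u_{a_j}$ satisfy $\beta_j := \|u_j\|_{L^\infty(\mathbb{H}^N)} \to 0$. I will use this pointwise decay together with the superlinear behaviour of $f$ near zero to force $\|u_j\| \to 0$ in the $\Dh$-norm, in direct contradiction with the uniform lower bound of Lemma~\ref{lemma 2.8}.

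First, I would test the weak formulation \eqref{weak formulation} with $\varphi = u_j$ to obtain
\[
\|u_j\|^2 = \int_{\mathbb{H}^N} g(\xi) f_{a_j}(u_j) u_j \, \dxi.
\]
Fix $\varepsilon > 0$. By hypothesis \ref{f1}, there is $\delta(\varepsilon) > 0$ such that $|f(s)| \leq \varepsilon s$ for every $s \in [0, \delta(\varepsilon)]$. Once $j$ is large enough that $\beta_j < \delta(\varepsilon)$, a brief case analysis based on \eqref{1.1} yields the pointwise estimate
\[
|f_{a_j}(u_j) u_j| \leq \varepsilon \, u_j^2 + a_j |u_j| \quad \text{a.e. in } \mathbb{H}^N.
\]
Indeed, on $\{u_j \geq 0\}$ one has $|f_{a_j}(u_j) u_j| \leq |f(u_j)| u_j + a_j u_j \leq \varepsilon u_j^2 + a_j u_j$; on $\{u_j \in [-1,0]\}$ one has $|f_{a_j}(u_j) u_j| = a_j |u_j+1||u_j| \leq a_j |u_j|$; and $f_{a_j} \equiv 0$ on $\{u_j \leq -1\}$.

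Integrating against $g$ and invoking the continuous weighted embeddings $\Dh \hookrightarrow L^1(\mathbb{H}^N,g)$ and $\Dh \hookrightarrow L^2(\mathbb{H}^N,g)$ provided by Lemma~\ref{compactness weight g} (with constants depending only on $g$), we arrive at
\[
\|u_j\|^2 \leq \varepsilon C \|u_j\|^2 + a_j C \|u_j\|
\]
for some $C = C(g) > 0$. Choosing $\varepsilon$ so small that $\varepsilon C < \tfrac{1}{2}$, this rearranges to $\|u_j\| \leq 2 a_j C \to 0$, contradicting the uniform positive lower bound $\|u_a\| \geq C_1$ from Lemma~\ref{lemma 2.8}. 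The desired $a_3 \in (0, a_2)$ and $\beta > 0$ must therefore exist.

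The main conceptual obstacle is recognizing that \emph{pointwise} smallness of $u_j$ (not merely smallness of $\|u_j\|_{L^2(\mathbb{H}^N,g)}$) is exactly what converts the superlinear assumption \ref{f1} into a usable absorption inequality on the right-hand side; the bound \eqref{2.2} by itself would only yield $\|u_j\|^2 \leq \varepsilon C\|u_j\|^2 + C(\varepsilon) \|u_j\|^\gamma + a_j C\|u_j\|$, where the middle term need not be absorbable without the extra factor $\beta_j^{\gamma-2}$ coming from the $L^\infty$ smallness. Once this observation is in place, the remaining estimates are routine consequences of the weighted embeddings already established.
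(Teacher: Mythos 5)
Your proof is correct, but it takes a genuinely different route from the paper. The paper argues directly: from the mountain pass level $J_a(u_a)\ge\delta$ together with $F_a(s)\ge -a|s|$ it derives $\|u_a\|^2\ge 2\delta_0>0$ for $a$ small, and then, using the Euler--Lagrange identity $\|u_a\|^2=\int_{\H^N} g\,f_a(u_a)u_a\,\dxi$ and the polynomial growth bound \eqref{2.3} with $g\in L^1(\H^N)$, it estimates $2\delta_0\le \widetilde C\bigl(\|u_a\|_{L^\infty}^\gamma+\|u_a\|_{L^\infty}\bigr)$, which yields an explicit $\beta$ in terms of $\delta_0$. You instead argue by contradiction and exploit the \emph{other} half of \ref{f1}, namely $f(s)/s\to 0$ as $s\to 0^+$: pointwise smallness $\|u_j\|_{L^\infty}\to 0$ lets you absorb the nonlinear term via the weighted embeddings of Lemma \ref{compactness weight g}, forcing $\|u_j\|\le 2Ca_j\to 0$ and contradicting the $\Dh$-norm lower bound of Lemma \ref{lemma 2.8}. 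Both arguments are sound and rest on essentially the same structural facts (the energy identity, $g\in L^1\cap L^\infty$, and a uniform lower bound coming from the mountain pass level, which you import through Lemma \ref{lemma 2.8} rather than re-deriving). The paper's version is quantitative and avoids contradiction; yours is softer but cleaner in that it reuses Lemma \ref{lemma 2.8} and only needs the behaviour of $f$ at the origin, not the growth exponent $\gamma$. One minor point of hygiene: fix $\varepsilon$ with $\varepsilon C<\tfrac12$ \emph{before} choosing $j$ large enough that $\beta_j<\delta(\varepsilon)$; as written the order of quantifiers is slightly loose, though trivially repaired.
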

\begin{proof}
From \eqref{delta estimate} and the lower estimate on $F_a$ as given in Lemma \ref{lemma 2.8}, for all $a\in (0,a_1)$ we have
\begin{align*}
    \frac{1}{2} \|u_a\|^2 &= J_a(u_a) + \int\limits_{\mathbb{H}^N} g(\xi) F_a(u_a) \dxi \geq \delta - a\int\limits_{\mathbb{H}^N} g(\xi)|u_a| \dxi \geq \delta - aC \|u_a\| \geq \delta - aCC_1,
\end{align*}
where $C$ and $C_1$ are positive constants. For simplicity, we denote $C_2=CC_1$. Choosing $a_2$ such that $0<a_2<\min\{\frac{\delta}{C_2}, a_1\}$ results into the following estimate
\begin{align*}
   \frac{\|u_a\|^2}{2} \geq \delta_0 = \delta -a_2C_2 >0, \text{ for } a \in (0,a_2). 
\end{align*}
Since $u_a$ is a weak solution to \eqref{Main problem}, the following identity holds for every $\varphi \in \Dh$
\begin{align*}
    \int\limits_{\H^N} \nabla_\H u_a \cdot \nabla_\H \varphi\, \dxi = \int\limits_{\H^N} g(\xi)f_a(u_a) \varphi(\xi) d\xi.
\end{align*}
Substituting $\varphi =u_a$ in the above identity, we get
\begin{align*}
       \int\limits_{\H^N} |\nabla_\H u_a |^2 \dxi = \int\limits_{\H^N} g(\xi)f_a(u_a) u_a d\xi.
\end{align*}
For every $a \in (0, a_2)$, we get $\int\limits_{\H^N} g(\xi)f_a(u_a) u_a d\xi \geq 2\delta_0 > 0$. Further, the estimate \eqref{2.3} gives
\begin{align*}
    2\delta_0 &\leq \int\limits_{\H^N} g(\xi)\left(C(|u_a|^{\gamma}+ |u_a|) \right) \dxi \leq \widetilde{C}\left(\|u_a\|_{L^{\infty}}^{\gamma}+ \|u_a\|_{L^{\infty}}\right).
\end{align*} 
Hence, there exists some $\beta>0$ such that $\|u_a\|_{L^{\infty}} \geq \beta$, for every $a \in (0,a_2)$.
\end{proof}

As in Theorem \ref{Regularity-of-solutions}, we consider $a_j \to 0$ as $j \to \infty.$ Let $\{u_j\} = \{u_{a_j}\}$ be the solutions to \eqref{Main problem} and $u_j \rightharpoonup u$ in $\Dh$ as $j \to \infty.$ Now consider the function $f_0$
\begin{align*}
   f_0(s) = 
\begin{cases}
    f(s) & \text{ if } s \geq 0 \\
    0 & \text{ if } s < 0. \\
\end{cases} 
\end{align*}

\begin{proposition}
	\label{prop:1}
Let $\{u_j\}=\{u_{a_j}\}$ be a sequence of solutions to \eqref{Main problem} such that $u_j \rightharpoonup u$ in $\Dh$. Then $u$ solves weakly the positone problem
\begin{equation}\label{Positone problem}
	-\Delta_\mathbb{H} u = g(\xi) f_0(u)\quad \hbox{in $\mathbb{H}^N$}. 
\end{equation}
Moreover, $u\in L^p(\mathbb{H}^N)\cap C(\mathbb{H}^N)$ for every $p\in [Q^*,\infty]$.
\end{proposition}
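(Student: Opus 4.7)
The strategy is to pass to the limit as $j \to \infty$ in the weak formulation of \eqref{Main problem} satisfied by $u_j$, and then to deduce the asserted regularity directly from the uniform bounds already established in Lemma \ref{uniform boundedness}.

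For $\varphi \in \C_c^\infty(\H^N)$, the identity
\begin{align*}
    \int_{\H^N} \nabla_\H u_j \cdot \nabla_\H \varphi\, \dxi = \int_{\H^N} g(\xi) f_{a_j}(u_j) \varphi(\xi)\, \dxi
\end{align*}
holds. Since Lemma \ref{uniform boundedness} supplies strong convergence $u_j \to u$ in $\Dh$, the left-hand side converges to $\int_{\H^N} \nabla_\H u \cdot \nabla_\H \varphi\, \dxi$. For the right-hand side, I first record pointwise convergence: from $a_j \to 0$, continuity of $f$, the piecewise definition \eqref{1.1}, and the a.e.\ convergence $u_j \to u$, a short case analysis based on the sign of $u(\xi)$ yields $f_{a_j}(u_j(\xi)) \to f_0(u(\xi))$ for a.e.\ $\xi \in \H^N$. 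To justify the passage to the limit I would dominate using the estimate \eqref{2.3} together with an a.e.\ dominating function $h \in L^{Q^*}(\H^N)$ for $\{|u_j|\}$ (obtained from strong $L^{Q^*}$-convergence, up to a subsequence):
\begin{align*}
    |g(\xi) f_{a_j}(u_j) \varphi(\xi)| \leq C \|\varphi\|_{L^\infty}\, g(\xi)\bigl(1 + h(\xi)^{\gamma-1}\bigr)\, \mathbf{1}_{\mathrm{supp}\,\varphi}(\xi).
\end{align*}
H\"older's inequality with conjugate pair $\bigl(\tfrac{Q^*}{\gamma-1}, \tfrac{Q^*}{Q^*-\gamma+1}\bigr)$, together with \ref{g1} (which via interpolation places $g$ in every $L^r(\H^N)$, $r \geq 1$), shows the dominating function is integrable. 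The dominated convergence theorem then yields the weak formulation of \eqref{Positone problem} for every $\varphi \in \C_c^\infty(\H^N)$. Density of $\C_c^\infty(\H^N)$ in $\Dh$, combined with the continuity of both sides of \eqref{Positone problem} as linear functionals of $\varphi \in \Dh$ (the right-hand side controlled again via \eqref{2.3} and Lemma \ref{compactness weight g}), extends the identity to every $\varphi \in \Dh$.

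The regularity claim is then immediate. By Lemma \ref{uniform boundedness} there exists $j_0$ such that $\|u_j\|_{L^\infty} \leq C$ for all $j \geq j_0$, so a.e.\ convergence produces $\|u\|_{L^\infty(\H^N)} \leq C$. Combined with $u \in \Dh \hookrightarrow L^{Q^*}(\H^N)$, interpolation gives $u \in L^p(\H^N)$ for every $p \in [Q^*, \infty]$. Continuity follows from $g f_0(u) \in L^r(\H^N)$ for all $r \in [1,\infty]$ (using $g \in L^1 \cap L^\infty$ and $f_0(u) \in L^\infty$) together with the sub-elliptic regularity results of Folland cited at the end of Lemma \ref{uniform boundedness}. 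The main obstacle I anticipate is the construction of the dominating function in the passage to the limit, since $\gamma - 1 > 1$ forbids any naive pointwise bound on $|u_j|^{\gamma-1}$; this is resolved precisely through the interplay between the $L^{Q^*}$-bound on the sequence (producing $h$) and the integrability of $g$ in every $L^r$.
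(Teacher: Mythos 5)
Your proposal is correct and follows essentially the same route as the paper: pass to the limit in the weak formulation via pointwise convergence of $g f_{a_j}(u_j)\varphi$ plus dominated convergence (the paper organizes this through the split $|f_{a_j}(u_j)-f_0(u)|\le a_j+|f_0(u_j)-f_0(u)|$ and tests first with $\varphi\ge 0$, then writes $\varphi=\varphi^+-\varphi^-$, rather than using $\C_c^\infty$ plus density), and then read off $u\in L^p(\H^N)\cap C(\H^N)$, $p\in[Q^*,\infty]$, from the uniform bounds and the Folland regularity already invoked in Lemma \ref{uniform boundedness}. These are only cosmetic differences, so no further comparison is needed.
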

\begin{proof}
Since $a_j \rightarrow 0$ as $j \rightarrow \infty$, there exists some $j_1\in\mathbb{N}$ such that $a_j \leq a_2$ for all $j \geq j_1$. Next, we prove that
\begin{align*}
   \int\limits_{\mathbb{H}^N} g(\xi) f_j (u_j) \varphi(\xi) d\xi \to \int\limits_{\mathbb{H}^N} g(\xi) f_0(u)\varphi(\xi) d\xi,\text{ as } j \to \infty, 
\end{align*}
for every $\varphi\in \Dh$ with $\varphi \geq 0$. The triangle inequality gives
\begin{align*}
     |f_j(u_j) - f_0(u)| \leq |f_j(u_j) - f_0 (u_j)| + |f_0(u_j) - f_0(u)|.
\end{align*}
The continuity of $f_0$ gives $f_0(u_j) \to f_0(u)$ as $j \to \infty$. Hence, the dominated convergence theorem yields
\begin{align*}
   \int\limits_{\mathbb{H}^N} g(\xi) |f_0(u_j) - f_0(u)| \varphi(\xi) d\xi  \to 0, \text{ as } j\to \infty. 
\end{align*}
Notice that $\abs{f_j(u_j) - f_0(u_j)} \leq a_j$. We can deduce that
\begin{align*}
    \int\limits_{\mathbb{H}^N} g(\xi) \abs{f_j(u_j) - f_0(u)} \varphi(z) d\xi \leq a_j \int\limits_{\mathbb{H}^N} g(\xi)\varphi(\xi) \dxi + \int\limits_{\mathbb{H}^N} g(\xi) |f_0(u_j) - f_0(u)| \varphi(\xi) d\xi.
\end{align*}
The right hand side in the above inequality tends to zero as $j \rightarrow \infty$. From \eqref{weak formulation}, we have
\begin{align*}
     \int\limits_{\mathbb{H}^N} \nabla_\mathbb{H} u_j (\xi) \nabla_\mathbb{H} \varphi(\xi) d\xi =  \int\limits_{\mathbb{H}^N} g(\xi) f_j(u_j) \varphi(\xi) d\xi, \forall \varphi\in \Dh, \quad\varphi\geq 0.
\end{align*}
Passing through the limit as $j\to\infty$, we get
\begin{align*}
     \int\limits_{\mathbb{H}^N} \nabla_\mathbb{H} u(\xi) \nabla_\mathbb{H} \varphi(\xi) d\xi = \int\limits_{\mathbb{H}^N} g(\xi) f_0(u) \varphi(\xi) \dxi, \forall \varphi \in \Dh, \quad\varphi\geq 0.
\end{align*}
For any $\varphi\in \Dh$, we can write $\varphi = \varphi^+ - \varphi^-$, where $\varphi^+ = \max\{\varphi,0\}$ and $\varphi^- = \max\{-\varphi,0\}$. Since $\varphi^+, \varphi^- \in \Dh$ and both are non-negative. We deduce the following identity:
\begin{equation}\label{2.24}
\int\limits_{\mathbb{H}^N} \nabla_\mathbb{H} u(\xi) \nabla_\mathbb{H} \varphi(\xi) d\xi = \int\limits_{\mathbb{H}^N} g(\xi) f_0(u) \varphi(\xi) d\xi,\quad \forall\varphi\in \Dh.
\end{equation}
The identity \eqref{2.24} infers that $u$ is a weak solution to \eqref{Main problem}. Thus, we get $u \in L^p(\mathbb{H}^N), p \in [Q^*,\infty]$. The continuity of $u$ also follows from Theorem \ref{Regularity-of-solutions}.
\end{proof} 

\begin{remark}
Every weak solution to \eqref{Positone problem} is non-negative. In particular, if $u$ is a weak solution to \eqref{Positone problem}, then choosing the test function $\phi = u^-$ into \eqref{2.24} yields 
\begin{equation}\label{3.11}
    \int\limits_{\mathbb{H}^N} \nabla_\mathbb{H} u(\xi) \nabla_\mathbb{H} u^-(\xi) d\xi \geq 0.
\end{equation}
Recall that
\begin{align}
\begin{split} \label{2.25}
    \int\limits_{\mathbb{H}^N} \nabla_\mathbb{H} u(\xi) \nabla_\mathbb{H} u^-(\xi) d\xi = - \int\limits_{\mathbb{H}^N} |\nabla_\mathbb{H} u^-(\xi)|^2 d\xi \leq 0.
\end{split}
\end{align}
From \eqref{3.11} and \eqref{2.25}, we get $\|u^-\|=0$, which implies $u^- \equiv 0$ a.e in $\mathbb{H}^N$. 
\end{remark}

In the following, We state the maximum principle given in \cite{Wang2001} that we will use in the further section.
\begin{proposition} \label{proposition 3.7}
    Let $ u \in \Dh$ satisfy $- \Delta_{\H} u \geq 0 $, i.e.,
$$
 \int\limits_{\H^N} \nabla_{\H} u \cdot \nabla_{\H} v \geq 0 \quad \text{for all } v \in \Dh \text{ with } v \geq 0.
$$
Then either $ u \equiv 0 $ or $ u $ cannot attain its minimum in $\H^N$.
\end{proposition}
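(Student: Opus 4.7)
The plan is to combine a local strong maximum principle for the sub-Laplacian with the global decay forced by membership in $\Dh$. Suppose $u$ attains its infimum at some point $\xi_0 \in \H^N$, and set $m := u(\xi_0) = \inf_{\H^N} u$. Working with a continuous representative (available via the interior regularity invoked in Lemma \ref{uniform boundedness}), the shifted function $w := u - m$ is continuous, non-negative on $\H^N$, satisfies $w(\xi_0) = 0$, and still satisfies $-\Delta_{\H} w \geq 0$ tested against non-negative $\C_c^\infty(\H^N)$ functions (a density step turns the hypothesis, originally phrased over $\Dh$, into this form).

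Next, on any Kor\'anyi ball $B_r(\xi_0) \subset \H^N$, $w$ is a non-negative weak super-solution of the sub-Laplacian with an interior zero at $\xi_0$. At this stage I would invoke Bony's strong maximum principle for hypoelliptic operators of H\"ormander type: since the horizontal vector fields $\{X_j, Y_j\}_{j=1}^N$ together with the commutators $[X_j, Y_j] = -4 T$ span the tangent space at every point, the H\"ormander rank condition holds for $\Delta_{\H}$, and Bony's theorem forces $w \equiv 0$ on $B_r(\xi_0)$. Equivalently, a Harnack-type inequality for non-negative solutions of $-\Delta_{\H} = 0$ due to Capogna--Danielli--Garofalo yields the same conclusion.

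Setting $A := \{\xi \in \H^N : u(\xi) = m\}$, the local step shows $A$ is open, while continuity of $u$ shows $A$ is closed; since $\xi_0 \in A$ and $\H^N$ is connected, one has $A = \H^N$ and $u \equiv m$. But $u \in \Dh \hookrightarrow L^{Q^*}(\H^N)$, and the only constant function in $L^{Q^*}(\H^N)$ is zero, hence $m = 0$ and $u \equiv 0$. This is exactly the required dichotomy: if $u$ attains its minimum in $\H^N$, then necessarily $u \equiv 0$.

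The principal obstacle is the local strong maximum principle step itself. Unlike the Euclidean Laplacian, $\Delta_{\H}$ degenerates along the vertical direction $T = \partial_t$, so Hopf's boundary-point lemma does not transfer directly. One must either appeal to Bony's hypoelliptic strong maximum principle (for which the H\"ormander condition on $\H^N$ is immediate from the commutator computation above) or build an explicit barrier using the fundamental solution $\Gamma(\xi) = c_Q |\xi|_{\H}^{-(Q-2)}$ and an annular comparison in the Kor\'anyi metric, which is the route carried out in \cite{Wang2001}.
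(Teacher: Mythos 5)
You should first note that the paper does not prove Proposition \ref{proposition 3.7} at all: it is quoted from \cite{Wang2001} and used as a black box, so there is no internal argument to compare yours with. Your proof is the standard strong--minimum--principle route and its skeleton is sound: shift by the attained minimum $m$, observe that $\nabla_{\H}(u-m)=\nabla_{\H}u$ so the supersolution inequality survives the shift, annihilate $u-m$ on a ball around the minimum point, propagate by the open--closed connectedness argument, and finally use that the only constant in $L^{Q^*}(\H^N)$ is $0$ to conclude $m=0$ and $u\equiv 0$; the commutator computation $[X_j,Y_j]=-4T$ verifying the H\"{o}rmander condition is also correct.

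Two steps, however, need repair. First, the continuity you invoke is not supplied by Lemma \ref{uniform boundedness}: that lemma concerns solutions of the semilinear equation with a bounded right-hand side, whereas the proposition only assumes $u\in\Dh$ with $-\Delta_{\H}u\geq 0$ in the weak sense, and a weak supersolution need not be continuous (hypoellipticity upgrades solutions of $\Delta_{\H}u=0$, not one-sided inequalities). In the paper's application the function inserted into the proposition is already known to be continuous, but a proof of the proposition as stated must either work with a lower semicontinuous representative and essential infima, or record continuity as a hypothesis. Second, and for the same reason, Bony's strong maximum principle is a statement about classical (at least $C^2$) functions of H\"{o}rmander-type operators and does not apply directly to a $W^{1,2}_{\H,\mathrm{loc}}$ supersolution; the correct local tool is precisely the alternative you name, the weak Harnack inequality for nonnegative weak supersolutions of the sub-Laplacian (Capogna--Danielli--Garofalo, i.e.\ Moser iteration adapted to the horizontal vector fields), which gives $\operatorname{ess\,inf}_{B_r}(u-m)\geq c\,\bigl(|B_r|^{-1}\int_{B_r}(u-m)^{q}\,\dxi\bigr)^{1/q}$ for some $q>0$ and hence $u\equiv m$ a.e.\ on the ball once the infimum is attained there; the chaining and the $L^{Q^*}$ step then finish the proof exactly as you wrote. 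With the Harnack inequality replacing Bony and the pointwise-versus-essential-infimum issue addressed, your argument is a complete and self-contained proof of the cited result.
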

Applying Proposition \ref{proposition 3.7} to \eqref{Positone problem} and using Lemma \ref{linfinity lower bound}, it follows that $u>0$ a.e. in $\H^N$.
\section{Towards the positivity of the solution}\label{section4}
\begin{proposition} \label{uniform convergence}
 Let $\{u_{a_j}\}$ be a sequence of solutions to \eqref{Main problem} such that $u_{a_j} \rightharpoonup u$ in $\Dh$ as $j \rightarrow \infty$. Then, $\norm{u_{a_j} -u}_{L^\infty} \rightarrow 0$ as $j \rightarrow \infty$. In particular, there exists $a_3 \in (0,a_2)$ such that $u_a \geq 0$ for all $a \in (0,a_3)$.
\end{proposition}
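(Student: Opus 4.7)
My plan is to adapt the Moser iteration argument from Lemma~\ref{uniform boundedness} to the difference $w_j := u_j - u$. Recall that in the proof of Lemma~\ref{uniform boundedness} we already obtained the strong convergence $u_j \to u$ in $\Dh$ (hence $\|w_j\|_{L^{Q^*}} \to 0$ via the Sobolev embedding $\Dh \hookrightarrow L^{Q^*}(\H^N)$), together with the uniform bounds $\|u_j\|_{L^\infty}, \|u\|_{L^\infty} \leq C$. The difference satisfies
\[
-\Delta_\H w_j = g(\xi)\bigl(f_{a_j}(u_j) - f_0(u)\bigr) =: g(\xi) h_j(\xi),
\]
and the $L^\infty$ bounds combined with \eqref{2.3} yield $\|h_j\|_{L^\infty} \le C$ uniformly in $j$.

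The iteration step is to test the equation for $w_j^+$ with the truncated function $F_L(w_j^+) = w_j^+ \min\{(w_j^+)^{q-2}, L^2\}$ introduced in Lemma~\ref{uniform boundedness}, with auxiliary $G_L(w_j^+) = w_j^+ \min\{(w_j^+)^{q/2-1}, L\}$. Using $|h_j| \le C$, the bound $F_L(w_j^+) \le C_q (w_j^+)^{q-1}$, and H\"older's inequality with conjugate exponents $(q, q/(q-1))$ (noting $g\in L^q(\H^N)$ for every $q \ge 1$ thanks to \ref{g1}), one obtains
\[
\int_{\H^N} |\nabla_\H G_L(w_j^+)|^2 \dxi \le C\, C_q \|g\|_{L^q}\|w_j^+\|_{L^q}^{q-1}.
\]
Passing $L \to \infty$ via monotone convergence and applying Sobolev to $(w_j^+)^{q/2}$ gives
\[
\|w_j^+\|_{L^{Q^*q/2}} \le \bigl(C\, C_q \|g\|_{L^q}\bigr)^{1/q} \|w_j^+\|_{L^q}^{(q-1)/q}.
\]
Choosing $q_0 = Q^*$ and $q_{k+1} = (Q^*/2) q_k$ and iterating, the convergence of $\sum 1/q_k$ and $\sum (\log q_k)/q_k$ along this geometric sequence allows one to telescope both the multiplicative constant and the residual exponent, producing
\[
\|w_j^+\|_{L^\infty} \le M \|w_j^+\|_{L^{Q^*}}^{\beta}
\]
for some $M, \beta > 0$ independent of $j$. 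Repeating the argument for $w_j^- = (u-u_j)^+$ and invoking $\|w_j\|_{L^{Q^*}} \to 0$ completes the proof.

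The main obstacle I anticipate is the bookkeeping of the iteration: unlike Lemma~\ref{uniform boundedness}, where the extra factor $(1+|u_j|)$ generated an additional $w_j^+$ on the right-hand side and the per-step exponent on $\|u_j^+\|_{L^{qs_0'}}$ was exactly $1$, here the exponent is only $(q_k-1)/q_k < 1$. One must therefore verify that $\prod_k (q_k-1)/q_k > 0$, so that $\beta > 0$, and that the telescoped product of constants $\bigl(C\,C_{q_k}\|g\|_{L^{q_k}}\bigr)^{1/q_k}$ converges; both reduce to the geometric growth of $q_k$ together with $g \in L^1(\H^N) \cap L^\infty(\H^N)$ keeping $\|g\|_{L^{q_k}}$ uniformly bounded. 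A minor technical point is justifying $\|w_j^+\|_{L^{q_K}} \to \|w_j^+\|_{L^\infty}$ as $K \to \infty$, which follows from the uniform $L^\infty$ bound and $w_j \in L^{Q^*}(\H^N)$.
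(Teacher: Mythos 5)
Your argument is correct, but it follows a genuinely different route from the paper. The paper's proof is potential-theoretic: it invokes the Riesz representation with the fundamental solution $C_N|\xi|_{\mathbb{H}}^{2-Q}$ of $-\Delta_\H$, writes $u_j$ and $u$ as potentials of $g f_{a_j}(u_j)$ and $g f_0(u)$, splits the integral into the region $|\eta^{-1}\circ\xi|_{\mathbb{H}}<1$ (handled by H\"older, using that $|\eta^{-1}\circ\xi|_{\mathbb{H}}^{-(Q-2)\theta}$ is locally integrable for suitable $\theta>1$) and its complement (where the kernel is bounded), and concludes by dominated convergence, uniformly in $\xi$. You instead run a Moser iteration directly on $w_j=u_j-u$, which solves $-\Delta_\H w_j=g\,(f_{a_j}(u_j)-f_0(u))$ with right-hand side bounded in $L^\infty$ uniformly in $j$ thanks to the uniform $L^\infty$ bounds from Lemma \ref{uniform boundedness} and Proposition \ref{prop:1}; testing with $F_L(w_j^\pm)$ and iterating $q_{k+1}=\tfrac{Q^*}{2}q_k$ yields the quantitative bound $\|w_j^\pm\|_{L^\infty}\le M\|w_j^\pm\|_{L^{Q^*}}^{\beta}$ with $\beta=\prod_k(1-1/q_k)>0$ and $M$ controlled because $\sum_k q_k^{-1}$ and $\sum_k q_k^{-1}\log q_k$ converge and $\|g\|_{L^{q_k}}$ is uniformly bounded by \ref{g1}; the conclusion then follows from the strong convergence $u_j\to u$ in $\Dh\hookrightarrow L^{Q^*}(\H^N)$ already established in Lemma \ref{uniform boundedness}. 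The exponent and constant bookkeeping you flag does close (note only that the inequality $\|w\|_{L^{Q^*}}^{\Theta_K}\le\|w\|_{L^{Q^*}}^{\beta}$ requires $\|w_j\|_{L^{Q^*}}\le 1$, harmless for $j$ large, and that $\|w\|_{L^\infty}\le\liminf_K\|w\|_{L^{q_K}}$ is the standard fact needed at the end). What each approach buys: the paper's proof is short once the representation formula is available and dovetails with the subsequent positivity argument, which reuses the same kernel and hypothesis (2) of Theorem \ref{thm 1.1}; your proof stays entirely within the variational framework, needs neither the fundamental solution nor any extra hypothesis on $g$ beyond \ref{g1}, gives an explicit rate $\|u_j-u\|_{L^\infty}\lesssim\|u_j-u\|_{L^{Q^*}}^{\beta}$, and is the kind of argument that could survive in quasilinear settings (e.g.\ the $p$-sub-Laplacian of Question 4 in Section \ref{Open-questions}) where no representation formula exists.
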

\begin{proof}
The fundamental solution of the sub-Laplacian on $\mathbb{H}^N$ is given by $ \mathcal{F}(\xi) =   C_N \frac{1}{|\xi|_{\mathbb{H}}^{Q-2}}, \xi \neq 0$. Recall that $u$ satisfies $-\Delta_{\mathbb{H}} u = g(\xi) f_0(u) \text{ in } {\mathbb{H}^N}$. Thus, the Riesz potential formula \cite[Corollary 1]{Folland1973} yields
\begin{align*}
   u(\xi) =  C_N\int\limits_{\mathbb{H}^N} \frac{g(\eta)f_0(u(\eta))}{|\eta^{-1} \circ \xi|_{\mathbb{H}}^{Q-2}} \deta \quad\text{ and }\quad   u_j(\xi)  =  C_N\int\limits_{\mathbb{H}^N} \frac{g(\eta)f_j(u(\eta))}{|\eta^{-1} \circ \xi|_{\mathbb{H}}^{Q-2}}\deta.
\end{align*}
Now we get the following estimate
\begin{align*}
    |u(\xi) - u_j(\xi)| &\leq  C_N\int\limits_{\mathbb{H}^N} g(\eta) \frac{|f_j(u(\eta)) - f_0(u(\eta))|}{|\eta^{-1} \circ \xi|_{\mathbb{H}}^{Q-2}} \deta \\
    &\leq C_N\int\limits_{B_1} g(\eta) \frac{|f_j(u(\eta)) - f_0(u(\eta))|}{|\eta^{-1} \circ \xi|_{\mathbb{H}}^{Q-2}} \deta + C_N\int\limits_{\mathbb{H}^N \setminus B_1} g(\eta) \frac{|f_j(u(\eta)) - f_0(u(\eta))|}{|\eta^{-1} \circ \xi|_{\mathbb{H}}^{Q-2}} \deta,
\end{align*}
where $B_1=\{\eta \in \H^N : |\eta^{-1} \circ \xi|_{\mathbb{H}} < 1, \xi \in \H^N \}$. We choose $\theta >1$ such that $\frac{2 \theta}{\theta-1}>Q$ and using H\"{o}lder's inequality with conjugate pairs $(\theta,\theta')$, the first integral is estimated as follows:
\small
\begin{align*}
    \int\limits_{B_1} g(\eta) \frac{|f_j(u(\eta)) - f_0(u(\eta))|}{|\eta^{-1} \circ \xi|_{\mathbb{H}}^{Q-2}} \deta &\leq \left(  \int\limits_{B_1}  \frac{1}{|\eta^{-1} \circ \xi|_{\mathbb{H}}^{(Q-2)\theta}} \deta\right)^{\frac{1}{\theta}}\left(\int\limits_{B_1} g(\eta)^{\theta'}|f_j(u(\eta)) - f_0(u(\eta))|^{\theta'} \mathrm{d} \eta \right)^\frac{1}{\theta'}.
\end{align*} 
Using the polar integration formula on $\mathbb{H}^N$\cite[Proposition 1.15]{folland1982hardy}, we get
\begin{align*}
    \int\limits_{B_1}  \frac{1}{|\eta^{-1} \circ \xi|_{\mathbb{H}}^{(Q-2)\theta}}\,\mathrm{d}\eta = \omega_N \int\limits_0^1 \frac{r^{Q-1}}{r^{(Q-2)\theta}}\,\mathrm{d}r = C(N,\theta).
\end{align*}
Proceeding as in Theorem \ref{Regularity-of-solutions}, we get $u_j \to u$ in $L^q(\mathbb{H}^N),$ for $q \in [Q^*,\infty)$. Then, the dominated convergence theorem gives 
\begin{align*}
    \int\limits_{B_1} g(\eta)^{\theta'}|f_j(u(\eta)) - f_0(u(\eta))|^{\theta'} d \eta \to 0, \text{ as } j \to \infty.
\end{align*}
Further, we estimate
\begin{align*}
   \int\limits_{\mathbb{H}^N \setminus B_1} g(\eta) \frac{|f_j(u(\eta)) - f_0(u(\eta))|}{|\eta^{-1} \circ \xi|_{\mathbb{H}}^{Q-2}} \deta \leq \int\limits_{\mathbb{H}^N \setminus B_1} g(\eta) |f_j(u(\eta)) - f_0(u(\eta))| \deta. 
\end{align*}
The dominated convergence theorem gives,
\begin{align*}
    \int\limits_{\mathbb{H}^N \setminus B_1} g(\eta) \frac{|f_j(u(\eta)) - f_0(u(\eta))|}{|\eta^{-1} \circ \xi|_{\mathbb{H}}^{Q-2}} \deta \to 0, \quad \text{as } j \to \infty.
\end{align*}
Hence, we obtain $u_j \to u$ in $L^\infty(\mathbb{H}^N)$.
\end{proof}

\begin{proof}[Proof of the Theorem \ref{thm 1.1}]
(a). The proof of this part follows using Proposition \ref{uniform convergence}.\\

\noindent (b) For $j \in \mathbb{N}$ sufficiently large, since $f_j$ is locally Lipschitz and $0 \leq u_j,u \leq C,$ we have
\begin{align*}
    |f_j(u_j(\xi)) - f_0(u(\xi))|  \leq |f_j(u_j(\xi)) - f_j(u(\xi))| + |f_j(u(\xi)) - f_0(u(\xi))
    \leq L|u_j(\xi) - u(\xi)| + a_j,
\end{align*}
where $L$ is the Lipschitz constant. Further, We have
\begin{align*}
    |u_j(\xi) - u(\xi)| &\leq  C_N\int\limits_{\mathbb{H}^N} g(\eta) \frac{|f_j(u(\eta)) - f_0(u(\eta))|}{|\eta^{-1} \circ \xi|_{\mathbb{H}}^{Q-2}} \deta \\
    &\leq  C_N L \int\limits_{\mathbb{H}^N} g(\eta) \frac{|u_j(\xi) - u(\xi)|}{|\eta^{-1} \circ \xi|_{\mathbb{H}}^{Q-2}} \deta +C_N a_j \int\limits_{\mathbb{H}^N}  \frac{g(\eta)}{|\eta^{-1} \circ \xi|_{\mathbb{H}}^{Q-2}} \deta\\
    & \leq C_N C\left(\|u_j - u\|_{L^\infty(\mathbb{H}^N)} + a_j \right) \frac{1}{|\xi|_{\mathbb{H}}^{Q-2}}.
\end{align*}
Thus, we get
\begin{align*}
    \sup\limits_{\xi \in \mathbb{H}^N \setminus \{0\}} |\xi|_{\mathbb{H}}^{Q-2} |u_j(\xi) - u(\xi)| \to 0 \text{ as } j \to \infty.
\end{align*}
We next show that $ \lim\limits_{|\xi|_{\mathbb{H}} \to \infty} |\xi|_{\mathbb{H}}^{Q-2} u(\xi) > 0$. For that part, we notice that
\begin{align*}
    \lim\limits_{|\xi|_{\mathbb{H}} \to \infty} |\xi|_{\mathbb{H}}^{Q-2} u(\xi) &=  \lim\limits_{|\xi|_{\mathbb{H}} \to \infty} C_N \int\limits_{\mathbb{H}^N} \frac{g(\eta)f_0(u(\eta))|\xi|^{Q-2}}{|\eta^{-1} \circ \xi|_{\mathbb{H}}^{Q-2}} \deta \geq \lim\limits_{|\xi|_{\mathbb{H}} \to \infty} C_N  \int\limits_{B_R} \frac{g(\eta)f_0(u(\eta))|\xi|^{Q-2}}{|\eta^{-1} \circ \xi|_{\mathbb{H}}^{Q-2}} \deta,
\end{align*}
for any $R>0.$ Now for any $R>0,$ there exists $\xi \in \mathbb{H}^N$ such that $|\xi| > 2R+1.$
For all $\eta \in B_R$,
\begin{align*}
    |\eta^{-1} \circ \xi|_{\mathbb{H}}^{Q-2} &\geq ||\xi|_{\mathbb{H}} - |\eta|_{\mathbb{H}}|^{Q-2} \geq ||\xi|_{\mathbb{H}} - R|^{Q-2} \geq \left||\xi|_{\mathbb{H}} - \frac{|\xi|_{\mathbb{H}} - 1}{2}\right|^{Q-2} \\
    & \geq \left|\frac{|\xi|_{\mathbb{H}} + 1}{2}\right|^{Q-2}\\
    &= 2^{2-Q}(|\xi|_{\mathbb{H}} + 1)^{Q-2}.
\end{align*}
Thus, for $\eta \in B_R$, we can write
\begin{align*}
    \frac{|\xi|_{\mathbb{H}}^{Q-2}}{|\eta^{-1} \circ \xi|_{\mathbb{H}}^{Q-2}}g(\eta) f_0(u(\eta)) &\leq 2^{Q-2}g(\eta) f_0(u(\eta)).
\end{align*}
Moreover,
\begin{align*}
     \frac{|\xi|_{\mathbb{H}}^{Q-2}}{|\eta^{-1} \circ \xi|_{\mathbb{H}}^{Q-2}}g(\eta) f_0(u(\eta)) \to  g(\eta) f_0(u(\eta)), \text{ in } B_R  \text{ as } |\xi| \to \infty.
\end{align*}
The dominated convergence theorem yields
\begin{align*}
    \lim\limits_{|\xi|_{\mathbb{H}} \to \infty}  \int\limits_{B_R} \frac{g(\eta) f_0(u(\eta))|\xi|_{\mathbb{H}}^{Q-2}}{|\eta^{-1} \circ \xi|_{\mathbb{H}}^{Q-2}}g(\eta) f_0(u(\eta)) \deta =   \int\limits_{B_R} g(\eta) f_0(u(\eta)) \deta.
\end{align*}
Hence,
\begin{align*}
    \lim\limits_{|\xi|_{\mathbb{H}} \to \infty}  |\xi|^{Q-2} u(\xi)  \geq  C \int\limits_{B_R} g(\xi) f_0(u(\xi)) \dxi.
\end{align*}
Using Fatou's lemma, we get
\begin{align*}
    \lim\limits_{R \to \infty} \lim\limits_{|\xi|_{\mathbb{H}} \to \infty}  |\xi|^{Q-2} u(\xi)  \geq  C \lim\limits_{R \to \infty} \int\limits_{B_R} g(\xi) f_0(u(\xi)) \dxi \geq \int\limits_{\mathbb{H}^N} g(\xi) f_0(u(\xi)) \dxi,
\end{align*}
Thus, $ \lim\limits_{|\xi|_{\mathbb{H}} \to \infty}  |\xi|^{Q-2} u(\xi) > 0$. Consequently, there exists $j_2 \in \mathbb{N}$ and $R>>1$ such that for $j \geq j_2, u_j>0 $ a.e. on $\H^N \setminus B_R$. Also since $u$ is continuous, there exists $\gamma > 0$ such that $u > \gamma$ on $\overline{B_R}$ since $u>0$. Moreover, by Proposition \ref{uniform convergence}, there exists $j_3 \in \mathbb{N}$ such that for $j \geq j_3, u_j>0$ a.e. on $\overline{B_R}$. Finally, we choose $j_4=\max{\{j_2,j_3\}}$ so that $j \geq j_4, u_j>0$ on $\mathbb{H}^N$. In other words, then there exists $a_4 \in (0,a_3)$ such that $u_a > 0$ for all $a \in (0,a_4)$.
\end{proof} 
\begin{example}
(a) Let us consider a continuous function $f:[0,\infty) \rightarrow [0,\infty)$ as follows:
\begin{align*}
    f(t) = A t^{\gamma-1},
\end{align*}
where $A$ is a positive real constant and $\gamma \in (2,Q^*)$. Then we have
\begin{align*}
    \lim\limits_{t \rightarrow 0^+} \frac{f(t)}{t} = A \lim\limits_{t \rightarrow 0^+} t^{\gamma-2}= 0,
\end{align*}
and 
\begin{align*}
    \lim\limits_{t \rightarrow + \infty} \frac{f(t)}{t^{\gamma-1}} = \lim\limits_{t \rightarrow + \infty} \frac{A t^{\gamma-1}}{t^{\gamma-1}} =A.
\end{align*}
Thus the assumption \ref{f1} is verified using the above two limits.  Further, we have
\begin{align*}
    F(t) = \int_{0}^{t} f(\tau)\, \mathrm{d}\tau = \frac{A t^\gamma}{\gamma},
\end{align*}
which further implies $\gamma F(t) = t f(t)$. Hence, the assumption \ref{f2} is also verified.\\[5mm]
(b) Let us consider the function $g$ as follows:
\begin{align*}
    g(\xi) = \begin{cases}
        \frac{1}{|\xi|^{\alpha}} &\text{ \rm for } \xi \in \H^N \setminus B_1(0) \text{ and for some } \alpha > Q,\\
        1 &\text{\rm for } \xi \in B_1(0).
    \end{cases}
\end{align*}
Notice that $|g(\xi)| \leq 1$ for all $\xi \in \H^N$ and consequently we get $g \in L^\infty(\H^N)$. On the other hand, we have
\begin{align*}
    \int_{\H^N} g(\xi)\,\mathrm{d}\xi = \int_{B_1(0)} g(\xi)\,\mathrm{d}\xi + \int_{\H^N \setminus B_1(0)} g(\xi)\,\mathrm{d}\xi.
\end{align*}
The first integral on the right hand side is finite since $g$ is constant on $B_1(0)$. Using the polar integration formula, we evaluate the second integral as follows:
 \begin{align*}
     \int\limits_{\H^N \setminus B_1(0)} g(\xi) \dxi = \int\limits_{\H^N \setminus B_1(0)} \frac{1}{|\xi|^\alpha} \dxi 
     = \omega_N \int\limits_1^\infty \frac{r^{Q-1}}{r^\alpha} \dr
     =  \frac{\omega_N}{\alpha - Q},
 \end{align*}
 which is finite for $\alpha > Q.$ Hence, we deduce that $g \in L^1(\H^N)$. Therefore, the assumption \ref{g1} on $g$ is verified.
\end{example}

\section*{Aknowledgement}
R. Kumar acknowledges the financial support from the 'Northeast Petroleum University (Daqing, China) Postdoctoral International Exchange Program'.
The authors are also thankful to Dr. Gaurav Dwivedi (BITS Pilani) for carefully reading the manuscript. We sincerely thank the anonymous reviewers for the careful review and insightful corrections which improved the manuscript.

\end{document}